\definecolor{verylight}{gray}{0.97}
\definecolor{light}{gray}{0.9}
\definecolor{medium}{gray}{0.85}
\definecolor{dark}{gray}{0.6}
 \def\frk{\mathfrak}               
 \def\mm{{\frk m}}
 \def\G{{\mathcal G}}
 \def\Tet{\textup{teter}}
 \def\xb{{\mathbf x}}
 \def\0b{{\mathbf 0}}
\def\gb{{\mathbf g}}
 \def\opn#1#2{\def#1{\operatorname{#2}}} 
 \opn\chara{char} \opn\length{\ell} \opn\pd{pd} \opn\rk{rk}
 \opn\projdim{proj\,dim} \opn\injdim{inj\,dim} \opn\rank{rank}
 \opn\depth{depth} \opn\grade{grade} \opn\height{height}
 \opn\embdim{emb\,dim} \opn\codim{codim}
 \opn\Tr{Tr} \opn\bigrank{big\,rank}
 \opn\superheight{superheight}\opn\lcm{lcm}
 \opn\trdeg{tr\,deg}
 \opn\reg{reg} \opn\lreg{lreg} \opn\ini{in} \opn\lpd{lpd}
 \opn\size{size} \opn\sdepth{sdepth}
 \opn\link{link}\opn\fdepth{fdepth}\opn\lex{lex}
 \opn\tr{tr}
 \opn\type{type}
 \opn\gap{gap}
 \opn\arithdeg{arith-deg}
 \opn\HS{HS}
 \opn\tet{tet}
 \opn\div{div} \opn\Div{Div} \opn\cl{cl} \opn\Cl{Cl}
 \opn\Spec{Spec} \opn\Supp{Supp} \opn\supp{supp} \opn\Sing{Sing}
 \opn\Ass{Ass} \opn\Min{Min}\opn\Mon{Mon}
 \opn\Ann{Ann} \opn\Rad{Rad} \opn\Soc{Soc}\opn\Deg{Deg} \opn\Gen{Gen}\opn{\Im}{Im}
 \opn\Im{Im} \opn\Ker{Ker} \opn\Coker{Coker} \opn\Am{Am}
 \opn\Hom{Hom} \opn\Tor{Tor} \opn\Ext{Ext} \opn\End{End}
 \opn\Aut{Aut} \opn\id{id}
 \opn\nat{nat}
 \opn\pff{pf}
 \opn\Pf{Pf} \opn\GL{GL} \opn\SL{SL} \opn\mod{mod} \opn\ord{ord}
 \opn\Gin{Gin} \opn\Hilb{Hilb}\opn\sort{sort}
 \opn\PF{PF}\opn\Ap{Ap}
 \opn\mult{mult}
 \opn\bight{bight}
  \opn\bg{bg}
   \opn\gcl{gcl}
 \opn\aff{aff}
 \opn\relint{relint} \opn\st{st}
 \opn\lk{lk} \opn\cn{cn} \opn\core{core} \opn\vol{vol}  \opn\inp{inp} \opn\nilpot{nilpot}
 \opn\link{link} \opn\star{star}\opn\lex{lex}\opn\set{set}
 \opn\width{wd}
 \opn\Fr{F}
 \opn\QF{QF}
 \opn\G{G}
 \opn\type{type}\opn\res{res}
 \opn\conv{conv}
 \opn\Ind{Ind}
 \opn\soc{soc}
 \opn\gr{gr}
 \def\pot#1#2{#1[\kern-0.28ex[#2]\kern-0.28ex]}
 \opn\dirlim{\underrightarrow{\lim}}
 \opn\inivlim{\underleftarrow{\lim}}
 \let\sect=\cap
 \let\tensor=\otimes
 \let\iso=\cong
 \let\Dirsum=\bigoplus
 \let\to=\rightarrow
 \let\To=\longrightarrow
 \def\Implies{\ifmmode\Longrightarrow \else
         \unskip${}\Longrightarrow{}$\ignorespaces\fi}
 \def\implies{\ifmmode\Rightarrow \else
         \unskip${}\Rightarrow{}$\ignorespaces\fi}
 \def\iff{\ifmmode\Longleftrightarrow \else
         \unskip${}\Longleftrightarrow{}$\ignorespaces\fi}
 \newtheorem{Theorem}{Theorem}[section]
 \newtheorem{Lemma}[Theorem]{Lemma}
 \newtheorem{Corollary}[Theorem]{Corollary}
 \newtheorem{Proposition}[Theorem]{Proposition}
 \theoremstyle{definition}
 \newtheorem{Remark}[Theorem]{Remark}
 \newtheorem{Example}[Theorem]{Example}
 \let\epsilon\varepsilon
 \let\kappa=\varkappa
 \opn\dis{dis}
 \def\pnt{{\raise0.5mm\hbox{\large\bf.}}}
 \opn\Lex{Lex}
\begin{document}

\title{The canonical trace of determinantal rings}
\author{Antonino Ficarra,  J\"urgen Herzog,  Dumitru I. Stamate and Vijaylaxmi Trivedi}
\date{\today }

\address{Antonino Ficarra, Department of mathematics and computer sciences, physics and earth sciences, University of Messina, Viale Ferdinando Stagno d'Alcontres 31, 98166 Messina, Italy}
\email{antficarra@unime.it}

\address{J\"urgen Herzog, Fakult\"at f\"ur Mathematik, Universit\"at Duisburg-Essen, 45117
Essen, Germany} \email{juergen.herzog@uni-essen.de}

\address{Dumitru I. Stamate, Faculty of Mathematics and Computer Science, University of Bucharest, Str. Academiei 14, Bucharest -- 010014, Romania }
\email{dumitru.stamate@fmi.unibuc.ro}

\address{Vijaylaxmi Trivedi, School of Mathematics, Tata Institute of Fundamental Research, Homi Bhabha Road,
	Mumbai-400005, India}
\email{vija@math.tifr.res.in}

\thanks{This paper was written while the first, the third and the fourth author visited the Faculty of Mathematics of Essen. D.I. Stamate was partly supported by
	a grant of the Ministry of Research, Innovation and Digitization, CNCS - UEFISCDI, project number PN-III-P1-1.1-TE-2021-1633, within PNCDI III.}

\subjclass[2010]{Primary 13H10; Secondary 13M05.}
\keywords{canonical traces, determinantal rings, Teter numbers,  perfect codimension 2 ideals}

\maketitle

\begin{abstract}
We compute the canonical trace of generic determinantal rings  and provide a sufficient condition for the trace to specialize. As an application we determine  the canonical trace $\tr(\omega_R)$ of a Cohen--Macaulay ring  $R$ of codimension two, which is generically Gorenstein.  It is shown that if the defining ideal $I$ of $R$ is generated by $n$ elements, then $\tr(\omega_R)$ is generated  by the $(n-2)$-minors of the  Hilbert-Burch matrix of $I$.
\end{abstract}

\section*{Introduction}

Let $(R,\mm)$ be a Cohen--Macaulay local ring which admits a  canonical module or 
a finitely generated graded Cohen--Macaulay $K$-algebra with graded maximal ideal $\mm$.  In both cases we denote by $\omega_R$ the canonical module of $R$.

The {\em trace} of an $R$-module $M$ is defined to be the ideal
\[
\tr_R(M)=\sum_{\varphi\in\Hom_R(M,R)}\varphi(M)\subseteq R.
\]
We omit the index $R$ in $\tr_R(M)$, if it is clear from the context in which ring $R$ we are computing the trace. The trace of $\omega_R$ is called the  {\em canonical trace} of $R$. There is a particular interest in  the canonical trace because it  determines the non-Gorenstein locus of $R$. It also allows to define nearly Gorenstein rings, which are the rings for which  $\mm\subseteq \tr_R(\omega_R)$.   This class of rings have first been considered in \cite{HV}. The name “nearly Gorenstein” was introduced in \cite{HHS1}. In 1974 William  Teter \cite{T} studied  $0$-dimensional local rings which can be represented as Gorenstein rings modulo their socle. Such rings are nowadays called Teter rings.  It has been shown (see \cite{T},\cite{HV} and \cite{ET})  that $R$ is  a Teter ring if and only if there exists an epimorphism $\varphi\: \omega_R\to \mm$. This result shows  that a Teter ring is nearly Gorenstein. Inspired by this result, the authors of \cite{GHHM} define the Teter number $\Tet(R)$ of $R$ as the smallest integer $t$ for which there exist $R$-module homomorphisms $\varphi_1, \ldots, \varphi_t\: \omega_R\to \tr(\omega_R)$ with $\tr(\omega_R)=\sum_{i=1}^t\varphi_i(\omega_R)$.  For further studies of the canonical trace we refer the reader to \cite{D}.

In this paper we study the canonical trace  of determinantal rings. Let $K$ be a field,  and let $X$ be an $m\times n$ matrix  of indeterminates with $m<n$. Furthermore,  assume that  $1<r+1\le m$ and let $R=K[X]/I_{r+1}(X)$, where $I_{r+1}(X)$ is the ideal of $K[X]$ generated by the $(r+1)$-minors of $X$. Section~1 of this paper is devoted to prove that $\tr(\omega_R)=I_{r}(X)^{n-m}R$, see Theorem~\ref{generic}. As a consequence,  the non-Gorenstein locus and the singular locus of $R$ coincide.  

In Section~2 we determine the Teter number of $R$. It can be easily seen that in general the Teter number of a ring is bounded above by the minimal number of  generators of the anti-canonical module $\omega_R^{-1}$. We show in Theorem~\ref{Thm:TetGeneric} that  for our determinantal rings this upper bound is reached,  and  hence  by  \cite[Proposition 4.1]{BRW} it is equal to the determinant of $\big[\binom{2n-m-j}{n-i}\big]_{1\le i,j\le r}$.

It is natural to ask whether the canonical trace specializes. It is known and easily seen that if $\xb $ is an  $R$-sequence and  $\overline{R}=R/\xb R$, then $\tr(\omega_R)\overline{R}\subseteq \tr(\omega_{\overline{R}})$. If equality holds, we say that  canonical trace specializes. In Section~3 we give an explicit example which shows that the trace does not always specialize.  On the other hand, we prove in Theorem~\ref{reduction} that the canonical trace specializes if $R$ and $\overline{R}$ are generically Gorenstein and $\omega_R^2$ is a Cohen--Macaulay $R$-module. For our determinantal rings this condition on $\omega_R^2$ is satisfied if and only if $n\leq 2m-r$, see \cite[Theorem 4.3]{BRW}. We should stress the fact that Theorem~\ref{reduction} only gives a sufficient condition for the canonical trace to specialize. Indeed, we don't have any example of a determinantal ring whose canonical trace does not specialize. 

Let $I$ be graded perfect ideal  of height $2$ in the polynomial ring $S=K[x_1,\ldots, x_s]$. Assume that $I$ is minimally generated by $n$ homogeneous  elements and that $R=S/I$ is generically Gorenstein. Theorem~\ref{reduction} is used in Corollary ~\ref{codim2} to show that 
$
\tr(\omega_R)=I_{n-2}(A)R
$, 
where $A$ is a Hilbert-Burch matrix of $I$.  
This result is applied to rings defined by  generic perfect monomial ideals of grade $2$ and also to numerical semigroup rings generated by $3$ 
elements,  to recover a result in \cite{HHS2}. 

\section{The canonical trace of determinantal rings}

The aim of this section is to prove

\begin{Theorem}
	\label{generic}
	Let $K$ be a field, and let $X=(x_{ij})$ be an $m\times n$ matrix of indeterminates with $m\leq n$. Let $1\leq r+1\le m$ and $R=K[X]/I_{r+1}(X)$. Then 
	\[
	\tr(\omega_R)=I_{r}(X)^{n-m}R.
	\]
\end{Theorem}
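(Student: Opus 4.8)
The plan is to reduce the statement to a single product of divisorial ideals and then carry out that product inside the determinantal ring. Since $R=K[X]/I_{r+1}(X)$ is a normal Cohen--Macaulay domain, its canonical module $\omega_R$ can be realized as a divisorial (rank-one reflexive) ideal, and for such a module one has the standard identification $\tr(\omega_R)=\omega_R\cdot\omega_R^{-1}$, where $\omega_R^{-1}=\Hom_R(\omega_R,R)=(R:\omega_R)$ is computed inside the field of fractions of $R$. Thus the whole problem becomes the evaluation of one product of two divisorial ideals. As a consistency check I would keep the case $r=1$ in mind, where $R$ is the Segre cone over $\PP^{m-1}\times\PP^{n-1}$, $I_r(X)R=\mm$, and the assertion reads $\tr(\omega_R)=\mm^{\,n-m}$; and the case $m=n$, where $R$ is Gorenstein, $\omega_R\iso R$, and both sides collapse to $R=I_r(X)^0R$. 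This tells me the exponent $n-m$ should enter as the length of the canonical class.

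First I would pin down divisorial representatives using the classical description of the class group of determinantal rings. Here $\Cl(R)\iso\ZZ$ is generated by a class $\gamma=[\pp]$, where $\pp$ and $\qq$ are the two distinguished height-one primes generated by the $r$-minors of the first $r$ rows, respectively the first $r$ columns, of $X$, with $[\qq]=-\gamma$. The canonical class equals $[\omega_R]=(n-m)\gamma$, which is a nonnegative multiple of $\gamma$ because $m\le n$, so I may take $\omega_R=\pp^{(n-m)}$ and consequently $\omega_R^{-1}=\qq^{(n-m)}$, the divisorial ideal in the opposite class. The bridge to the target is the behaviour of the \emph{ordinary} product $\pp\qq$: each generator is a product of two $r$-minors, and up to the class-zero corner minor $\delta=[1,\ldots,r\mid 1,\ldots,r]$ (a nonzerodivisor) one has $\pp\qq=\delta\,I_r(X)R$, so that $I_r(X)R$ is obtained from $\pp\qq$ by removing the factor $\delta$. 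With these identifications the theorem is equivalent to the purely determinantal statement
\[
\pp^{(n-m)}\,\qq^{(n-m)}=I_r(X)^{n-m}R .
\]

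The heart of the proof, and the step I expect to be hardest, is exactly this equality of symbolic powers, and the main obstacle is the gap between symbolic and ordinary powers. Ordinary products of the generators of $\pp$ and $\qq$ carry stray factors of $\delta$, as the formula $\pp\qq=\delta\,I_r(X)R$ shows; the symbolic (divisorially closed) powers $\pp^{(t)}$ are strictly larger than $\pp^{t}$ precisely because they absorb these $\delta$-powers, which is what allows the clean ideal $I_r(X)^{t}R$, free of stray $\delta$'s, to emerge. To control the product I would localize at $\delta$: in $R_\delta$ the matrix admits a normal form, $\Cl(R_\delta)=0$, the primes $\pp_\delta,\qq_\delta$ become principal, so $\pp_\delta^{(t)}=\pp_\delta^{t}$ and the identity reduces to a transparent principal computation. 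I would then globalize by realizing $\pp^{(t)}$ as the $\delta$-saturation of $\pp^{t}$, comparing the two ideals after localization and using normality together with unmixedness and depth to rule out discrepancies in higher codimension, so that agreement as honest ideals (not merely up to radical, i.e. up to the singular locus $V(I_r(X))$) is forced. An induction on $n-m$, with base case $m=n$ being the Gorenstein situation, should organize the bookkeeping. The delicate point throughout remains the inclusion that upgrades the easy containment of radicals into an exact equality of ideals; resolving the symbolic-versus-ordinary discrepancy is what makes the precise power $I_r(X)^{n-m}R$ come out.
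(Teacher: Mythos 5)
Your framing---realize $\omega_R$ as a divisorial ideal, use $\tr(\omega_R)=\omega_R\cdot\omega_R^{-1}$, and reduce everything to a product computation involving the two distinguished height-one primes $P$ (first $r$ rows), $Q$ (first $r$ columns) and the corner minor $\delta$---is in the same spirit as the paper's proof, but your central reduction contains a fatal error. For an ideal $I$ of positive grade, $\tr(I)=I\cdot(R:I)$ where $(R:I)$ is the \emph{actual} fractional-ideal inverse computed in the fraction field; it is not $I\cdot J$ for an arbitrary integral divisorial ideal $J$ in the inverse class, because such a $J$ is only determined up to a principal factor. You replaced $(R:\omega_R)$ by the integral representative $Q^{(n-m)}$ (or $P^{(n-m)}$, depending on convention) of the inverse class, and this is exactly where a factor $\delta^{n-m}$ goes missing: the true dual is $(Q^{n-m})^{-1}=(\delta^{-1}P)^{n-m}$, a fractional ideal, and the trace is $Q^{n-m}(\delta^{-1}P)^{n-m}=\delta^{-(n-m)}(PQ)^{n-m}=I_r(X)^{n-m}R$, the $\delta^{-(n-m)}$ cancelling the $\delta^{n-m}$ produced by $(PQ)^{n-m}=\delta^{n-m}I_r(X)^{n-m}R$. (There is also a harmless slip of convention: for $m\le n$ it is the column prime $Q$, not the row prime $P$, that represents $\omega_R$, cf.\ \cite[Theorem 7.3.6]{BH}; this does not matter for the trace, which is symmetric under the swap.)

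Consequently, the ``purely determinantal statement'' you reduce to, namely $P^{(n-m)}Q^{(n-m)}=I_r(X)^{n-m}R$, is simply false whenever $m<n$: the left-hand side is contained in the height-one prime $P$, whereas $\height(I_r(X)R)>1$ (noted on \cite[p.\ 315]{BH} and used in the paper), so $I_r(X)\not\subseteq P$ and hence $I_r(X)^{n-m}R\not\subseteq P$ since $P$ is prime. In fact, in these determinantal rings the ordinary and symbolic powers of $P$ and $Q$ coincide, so your left-hand side equals $P^{n-m}Q^{n-m}=\delta^{n-m}I_r(X)^{n-m}R$, a proper subideal of the target; your picture of symbolic powers being ``strictly larger because they absorb the $\delta$-powers'' has the situation backwards---there is nothing to absorb, and no localization at $\delta$, saturation, or induction on $n-m$ can repair an identity that is untrue. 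What must actually be proved, and what constitutes the paper's argument, are two things your proposal either skips or misstates: first, $PQ=\delta I_r(X)R$, which you assert without proof (the paper derives the inclusion $PQ\subseteq\delta I_r(X)R$ from the straightening law and upgrades it to equality by a count of minimal generators using the factorization $R\iso K[Y\cdot Z]$); second, the identification of the dual $(Q^{\ell})^{-1}=(\delta^{-1}P)^{\ell}$, obtained by checking $Q^{\ell}R_L(\delta^{-1}P)^{\ell}R_L=R_L$ at every height-one prime $L$ (using $\height(I_r(X)R)>1$) and then invoking divisoriality of both sides. Your instinct to use height-one localization and divisoriality is the right one, but it must be aimed at computing the dual, not at the false symbolic-power identity.
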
 

The proof of this theorem needs some preparations. Let $X=(x_{ij})$ be an $m\times n$ matrix of indeterminates with $m\leq n$. Let $A\subseteq[m],B\subseteq[n]$ with $|A|=|B|=s$, and assume that  $A=\{a_1<\ldots<a_{s}\}$ and $B=\{b_1<\ldots<b_{s}\}$.  We denote by $[A|B]=[a_1,\dots,a_s|b_1,\dots,b_s]$ the $r$-minor of $X$, whose $i$-th row is the $a_i$-th row of $X$ and whose $j$-th column is the $b_j$-th  column of $X$.  

Let  $[C|D]=[c_1,\dots,c_t|d_1,\dots,d_t]$ be any other minors of $X$.  We set
$$
[A|B]\le[C|D]\ \iff\ s\ge t\ \text{and}\ a_i\le c_i,\ b_i\le d_i,\ i=1,\dots,t.
$$
Then $\le$ is a partial order on the set of all minors of $X$.

Let $r+1\leq m$. It is well-known that $K[x_{ij}]/I_{r+1}(X)$ is an \textit{algebra with straightening laws}. In particular, if $u,v$ are incomparable $r$-minors, we have a straightening relation
\begin{equation}\label{eq:incomLasagna}
u\cdot v=\sum_i \lambda_i u_iv_i,
\end{equation}
where $\lambda_i\in K$, $\lambda_i\ne0$, and for all $i$, $u_i,v_i$ are $r$-minors with $u_i\le v_i$, $u_i<u$ and $u_i<v$.

We denote again by $X$ the image of the matrix $X$ in $R$. Let $P$ be the ideal of $R$ generated by the $r$-minors of the first $r$ rows of $X$. Likewise, let $Q$ be the ideal of $R$ generated by the $r$-minors of the first $r$ columns of $X$. Furthermore, we denote by $\delta$ the minor $[1,\dots,r|1,\dots,r]$.

If $I$ is a graded ideal in $R$, $\mu(I)$ denotes the minimal number of generators. 
\begin{Lemma}\label{Lemma:Lasagna}
	With the notation introduced, we have for all $\ell\ge1$,
	$$
	\mu((PQ)^\ell)=\mu(P^\ell)\mu(Q^\ell).
	$$
\end{Lemma}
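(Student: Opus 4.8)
The plan is to express each of $\mu(P^\ell)$, $\mu(Q^\ell)$ and $\mu((PQ)^\ell)$ as a count of chains in a poset of $r$-minors and to link them through a single determinantal identity. First I would reduce everything to counting standard monomials. Each of $P$, $Q$ and $I_r(X)R$ is generated by $r$-minors, hence in degree $r$, so the $\ell$-th power is generated in the single degree $r\ell$ and $\mu(I^\ell)=\dim_K(I^\ell)_{r\ell}$ for each of them. By the straightening relation~\eqref{eq:incomLasagna}, a product of two $r$-minors expands into a $K$-combination of products $u_iv_i$ with $u_i\le v_i$, where $u_i$ and $v_i$ are again $r$-minors: indeed $u_i<u$ forces $\size(u_i)\ge r$, while in $R$ every nonzero minor has size $\le r$. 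Iterating, any product of $\ell$ $r$-minors straightens into a combination of standard monomials that are chains $w_1\le\cdots\le w_\ell$ of $r$-minors, and distinct such chains are linearly independent. Thus $\mu\big(I_r(X)^\ell R\big)$ equals the number of such chains; running the same argument inside the subalgebra generated by the maximal minors $[1,\dots,r|B]$ of the first $r$ rows (respectively $[A|1,\dots,r]$ of the first $r$ columns), whose straightening stays among those minors, shows that $\mu(P^\ell)$ is the number of chains $B_1\le\cdots\le B_\ell$ of $r$-subsets of $[n]$ and $\mu(Q^\ell)$ the number of chains $A_1\le\cdots\le A_\ell$ of $r$-subsets of $[m]$.

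The crux is the identity
\[
[1,\dots,r|B]\cdot[A|1,\dots,r]=\delta\cdot[A|B]\qquad\text{in }R,
\]
for all $r$-subsets $A\subseteq[m]$ and $B\subseteq[n]$. When $A=\{1,\dots,r\}$ or $B=\{1,\dots,r\}$ the two minors are comparable and the identity is clear. Otherwise they are incomparable, and in~\eqref{eq:incomLasagna} each $u_i$ satisfies $u_i<[1,\dots,r|B]$ and $u_i<[A|1,\dots,r]$; the first condition forces the row set of $u_i$ to be $\{1,\dots,r\}$ and the second forces its column set to be $\{1,\dots,r\}$, so $u_i=\delta$. Hence the product equals $\delta\cdot w$ for a unique $w\in I_r(X)R$, since $\delta$ is a nonzerodivisor ($R$ being a domain). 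To identify $w$ I would localize at $\delta$: there the top-left $r\times r$ block $X_{11}$ is invertible, the vanishing of all $(r+1)$-minors forces the Schur-complement relation $X_{22}=X_{21}X_{11}^{-1}X_{12}$, so that
\[
X=\begin{pmatrix}X_{11}\\ X_{21}\end{pmatrix}X_{11}^{-1}\begin{pmatrix}X_{11}&X_{12}\end{pmatrix}
\]
has generic rank $r$. Taking the $r\times r$ submatrix on rows $A$ and columns $B$ and using multiplicativity of the determinant for this product of three square blocks gives $\det X[A|B]=\det(X_{11})^{-1}[A|1,\dots,r]\,[1,\dots,r|B]$, which is the asserted identity. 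Establishing this identity is the main obstacle.

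Finally I would assemble the count. The generators of $PQ$ are the products $[1,\dots,r|B][A|1,\dots,r]=\delta[A|B]$, and as $A,B$ range over all $r$-subsets these are $\delta$ times all $r$-minors, so $PQ=\delta\,I_r(X)R$. Since $\delta$ is a nonzerodivisor, $(PQ)^\ell=\delta^\ell I_r(X)^\ell R$ and multiplication by $\delta^\ell$ is an isomorphism of graded modules $I_r(X)^\ell R\to\delta^\ell I_r(X)^\ell R$, whence $\mu((PQ)^\ell)=\mu\big(I_r(X)^\ell R\big)$. By the first paragraph the latter is the number of chains $(A_1,B_1)\le\cdots\le(A_\ell,B_\ell)$ of $r$-minors. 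In the product order such a chain is exactly a pair consisting of a chain $A_1\le\cdots\le A_\ell$ and a chain $B_1\le\cdots\le B_\ell$; these are therefore in bijection with the product of the two chain sets counted above, giving $\mu((PQ)^\ell)=\mu(P^\ell)\mu(Q^\ell)$, as claimed.
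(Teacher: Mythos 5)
Your proof is correct, but it takes a genuinely different route from the paper's. The paper never touches standard monomials: it uses the classical isomorphism $\varphi\colon R\to K[Y\cdot Z]$, with $Y$ a generic $m\times r$ and $Z$ a generic $r\times n$ matrix, under which $\varphi(P)=\delta_Y I_r(Z)$ and $\varphi(Q)=\delta_Z I_r(Y)$; multiplicativity of $\mu$ is then immediate because $I_r(Y)^\ell$ and $I_r(Z)^\ell$ live in disjoint sets of variables. You instead stay inside $R$, invoking the standard monomial basis theorem for the ASL structure to identify $\mu(P^\ell)$, $\mu(Q^\ell)$ and $\mu((PQ)^\ell)$ with numbers of multichains, together with the identity $[1,\dots,r|B]\cdot[A|1,\dots,r]=\delta\,[A|B]$, which you prove by a Schur-complement computation in $R_\delta$ (legitimate, since $R$ is a domain). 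Note that this identity is precisely the paper's Corollary~\ref{Cor:Lasagna} ($PQ=\delta I_r(X)$): the paper deduces that corollary \emph{from} Lemma~\ref{Lemma:Lasagna} by a generator count, whereas you prove it first, independently, and derive the lemma from it, so the logical order is reversed and there is no circularity. What each approach buys: the paper's argument is shorter and needs only that $\mu$ is unchanged under multiplication by a nonzerodivisor and is multiplicative across disjoint variable sets; yours is more explicit, giving the combinatorial interpretation of all three generator counts as multichain counts and the factorization of chains in a product poset, plus an independent localization proof of Corollary~\ref{Cor:Lasagna}. One small point you should make explicit: your claim that straightening a product $[1,\dots,r|B][1,\dots,r|B']$ stays among minors with row set $\{1,\dots,r\}$ does not follow from the partial order alone --- the constraint $u_i<u$ in~\eqref{eq:incomLasagna} pins down the row set of $u_i$ but not of $v_i$; you need that the straightening relations are homogeneous with respect to the $\ZZ^m\times\ZZ^n$ multigrading (row and column multidegrees are preserved), which is standard but should be stated.
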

\begin{proof}
	Let $Y=(y_{ij})$ be an $m\times r$ matrix and $Z=(z_{ij})$ be an $r\times n$ matrix of indeterminates. For an integer $k$, we set $[k]=\{1,\dots,k\}$. Furthermore, if $I\subseteq[m]$ with $|I|=r$, we denote by $Y_I$ the $r\times r$ submatrix  of $Y$ whose rows are indexed by $I$. Similarly, if $J\subseteq[n]$ with $|J|=r$, we denote by $Z_J$ the $r\times r$ submatrix  of $Z$ whose columns are indexed by $J$. We set $\delta_Y=\det(Y_{[r]})$ and $\delta_Z=\det(Z_{[r]})$.
	
	For the proof we use the isomorphism
	\begin{align*}
	\varphi:R\rightarrow K[Y\cdot Z],\ \ x_{ij}\mapsto(Y\cdot Z)_{ij}\ \ \textup{for all}\ \ i\in[m],\ j\in[n].
	\end{align*}
	Let $u=[1,\dots,r|J]\in P$ be an $r$-minor, with $J\subseteq[n]$, $|J|=r$. Then
	$$
	\varphi(u)=\det((Y\cdot Z)_{i\in[r],j\in J})=\det(Y_{[r]}\cdot Z_{J})=\delta_Y\det(Z_{J}).
	$$
	Thus $\varphi(P)=\delta_Y I_{r}(Z)$. Similarly, we have $\varphi(Q)=\delta_Z I_r(Y)$. This implies that
	$$
	\varphi(P^\ell Q^\ell)=(\delta_Y\delta_Z)^\ell I_r(Y)^\ell I_r(Z)^\ell.
	$$
	Therefore,
	$$
	\mu((PQ)^\ell)=\mu(I_r(Y)^\ell I_r(Z)^\ell)=\mu(I_r(Y)^\ell)\mu(I_r(Z)^\ell)=\mu(P^\ell)\mu(Q^\ell).
	$$
	The second equality follows from the fact that the generators of $I_r(Y)$ and $I_r(Z)$ are polynomials in different sets of variables.
\end{proof}
\begin{Corollary}\label{Cor:Lasagna}
	With the notation introduced, we have
	$$
	PQ=\delta I_{r}(X).
	$$
\end{Corollary}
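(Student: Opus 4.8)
The plan is to prove the two inclusions $PQ\subseteq\delta I_r(X)$ and $\delta I_r(X)\subseteq PQ$ separately: the first by a direct straightening computation, the second by comparing minimal numbers of generators via Lemma~\ref{Lemma:Lasagna}. The reduction that makes the count decisive is that both ideals are generated in the single degree $2r$ and vanish in lower degrees, since $P$ and $Q$ are generated by $r$-minors and $\delta$ has degree $r$. Hence for $M\in\{PQ,\,\delta I_r(X)\}$ one has $\mu(M)=\dim_K M_{2r}$, so that an inclusion together with the equality of these two dimensions forces equality of the ideals.

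For $PQ\subseteq\delta I_r(X)$ I would work on the generators $u\cdot v$, where $u=[1,\dots,r\,|\,J]\in P$ and $v=[I\,|\,1,\dots,r]\in Q$. If $u=\delta$ or $v=\delta$ there is nothing to do, as the other factor is an $r$-minor and lies in $I_r(X)$. Otherwise $u$ and $v$ are forced to be incomparable: inspecting the partial order, $u\le v$ would pin the columns of $u$ to $\{1,\dots,r\}$, giving $u=\delta$, while $v\le u$ would pin the rows of $v$ to $\{1,\dots,r\}$, giving $v=\delta$. Thus \eqref{eq:incomLasagna} applies and yields $uv=\sum_i\lambda_i u_iv_i$ with $u_i<u$ and $u_i<v$. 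The crucial observation is that these two strict inequalities collapse each lower term: $u_i<u=[1,\dots,r\,|\,J]$ forces the rows of $u_i$ to be exactly $\{1,\dots,r\}$ (each row index $a_k$ obeys $k\le a_k\le k$), and $u_i<v=[I\,|\,1,\dots,r]$ likewise forces its columns to be $\{1,\dots,r\}$. Hence every $u_i=\delta$, so $uv=\delta\sum_i\lambda_i v_i\in\delta I_r(X)$, and $PQ\subseteq\delta I_r(X)$ follows.

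It then remains to match generator counts. By Lemma~\ref{Lemma:Lasagna} with $\ell=1$, $\mu(PQ)=\mu(P)\mu(Q)$. Since $R$ agrees with $K[X]$ in degrees $\le r$ and distinct $r$-minors are $K$-linearly independent there, $P$ has one minimal generator for each $r$-subset of columns and $Q$ one for each $r$-subset of rows, so $\mu(P)\mu(Q)=\binom{n}{r}\binom{m}{r}$. On the other hand $R$ is a domain, so multiplication by $\delta$ is injective, $\delta I_r(X)\cong I_r(X)(-r)$, and $\mu(\delta I_r(X))=\mu(I_r(X))=\binom{m}{r}\binom{n}{r}$, the total number of $r$-minors of $X$. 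Thus $\dim_K(PQ)_{2r}=\dim_K(\delta I_r(X))_{2r}$; combined with the inclusion already proved this gives $(PQ)_{2r}=(\delta I_r(X))_{2r}$, and since both ideals are generated in degree $2r$ we conclude $PQ=\delta I_r(X)$.

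I expect the straightening step to be the only real obstacle: one must verify that incomparability of $u$ and $v$ is automatic once neither equals $\delta$, and that the order relations $u_i<u$ and $u_i<v$ are rigid enough to force every lower term $u_i$ to be $\delta$ itself. The remaining arguments are routine bookkeeping with the grading and with the generator count supplied by Lemma~\ref{Lemma:Lasagna}.
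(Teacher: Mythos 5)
Your proof is correct and follows essentially the same route as the paper: the inclusion $PQ\subseteq\delta I_r(X)$ via the straightening relation (with $\delta$ as the only $r$-minor below both factors) and the reverse via the generator count $\mu(PQ)=\mu(P)\mu(Q)=\binom{n}{r}\binom{m}{r}=\mu(\delta I_r(X))$ from Lemma~\ref{Lemma:Lasagna}. You merely make explicit some points the paper leaves implicit, namely the rigidity argument $k\le a_k\le k$ forcing $u_i=\delta$, and the observation that both ideals are generated in the single degree $2r$ so that the count of minimal generators plus the inclusion indeed forces equality.
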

\begin{proof}
	Let $u\in P$ and $v\in Q$. If $u=\delta$ or $v=\delta$, then $uv\in\delta I_{r}(X)$. Suppose both of them are different from $\delta$. Then $u$ and $v$ are incomparable $r$-minors. The only $r$-minor which is less than $u$ and $v$ is $\delta$. Therefore, equation (\ref{eq:incomLasagna}) implies that for each $i$, $u_i=\delta$. This shows that $PQ\subseteq\delta I_r(X)$.
	
	It is clear that $\mu(\delta I_r(X))=\mu(I_{r}(X))=\binom{m}{r}\binom{n}{r}$. Hence, if we show that $\mu(PQ)=\binom{m}{r}\binom{n}{r}$, then equality follows. This follows from the previous lemma with $\ell=1$, because $\mu(P)=\binom{n}{r}$ and $\mu(Q)=\binom{m}{r}$.
\end{proof}

Theorem \ref{generic} is now a consequence of the following more general result, observing that $Q^{n-m}$ is the canonical ideal $\omega_R$ of $R$ \cite[Theorem 7.3.6]{BH}.

\begin{Theorem}
	We have $\tr(Q^\ell)=I_{r}(X)^\ell R$, for all $\ell\ge1$.
\end{Theorem}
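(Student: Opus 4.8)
The plan is to use that $R=K[X]/I_{r+1}(X)$ is a normal Cohen--Macaulay domain. For any nonzero ideal $I\subseteq R$ there is the canonical identification $\Hom_R(I,R)\cong I^{-1}:=(R:_{\operatorname{Quot}(R)}I)$, where $x\in I^{-1}$ corresponds to multiplication by $x$; hence $\tr(I)=I\cdot I^{-1}$. Applying this to $I=Q^\ell$, the statement $\tr(Q^\ell)=I_r(X)^\ell R$ becomes the identity $Q^\ell\,(Q^\ell)^{-1}=I_r(X)^\ell R$, which I will prove by pinning down $(Q^\ell)^{-1}$ exactly. Throughout I use that $P$ and $Q$ are height-one primes of the normal domain $R$, that $\delta\in P\cap Q\cap I_r(X)$ (it is an $r$-minor lying in all three ideals), and that $I_r(X)\not\subseteq P$, since $I_r(X)$ involves $r$-minors not using the first $r$ rows.

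For the inclusion ``$\supseteq$'' I invoke Corollary~\ref{Cor:Lasagna}. From $PQ=\delta I_r(X)$ I get $(PQ)^\ell=\delta^\ell I_r(X)^\ell$, so $\delta^{-\ell}P^\ell\cdot Q^\ell=\delta^{-\ell}(PQ)^\ell=I_r(X)^\ell\subseteq R$. This shows $\delta^{-\ell}P^\ell\subseteq (Q^\ell)^{-1}$, and therefore $I_r(X)^\ell=\delta^{-\ell}P^\ell Q^\ell\subseteq Q^\ell(Q^\ell)^{-1}=\tr(Q^\ell)$. Concretely, for each $g\in P^\ell$ multiplication by $g/\delta^\ell$ is a well-defined homomorphism $Q^\ell\to R$, and the sum of the images of these particular maps is already $I_r(X)^\ell$.

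The reverse inclusion is the heart of the argument and amounts to $(Q^\ell)^{-1}\subseteq\delta^{-\ell}P^\ell$. I would argue by a colon computation. Take $x\in(Q^\ell)^{-1}$, so $xQ^\ell\subseteq R$; since $\delta^\ell\in Q^\ell$, the element $y:=x\delta^\ell$ lies in $R$. Using Corollary~\ref{Cor:Lasagna} once more,
$y\,I_r(X)^\ell=x\delta^\ell I_r(X)^\ell=x\,(PQ)^\ell=P^\ell\,(xQ^\ell)\subseteq P^\ell$.
Thus $y$ multiplies $I_r(X)^\ell$ into $P^\ell$. If $I_r(X)$ contains a nonzerodivisor modulo $P^\ell$ (for instance $t^\ell$ for any $t\in I_r(X)\setminus P$), then this nonzerodivisor forces $y\in P^\ell$, whence $x=y\delta^{-\ell}\in\delta^{-\ell}P^\ell$. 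Combining with the previous paragraph gives $(Q^\ell)^{-1}=\delta^{-\ell}P^\ell$, and then $\tr(Q^\ell)=Q^\ell\cdot\delta^{-\ell}P^\ell=\delta^{-\ell}(PQ)^\ell=I_r(X)^\ell R$, as desired.

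The step I expect to be the main obstacle is precisely the hypothesis just used: that $I_r(X)$ meets the complement of every associated prime of $P^\ell$, equivalently that no associated prime of $P^\ell$ contains $I_r(X)$. This holds as soon as $P^\ell$ is $P$-primary (i.e.\ $\Ass(R/P^\ell)=\{P\}$, equivalently $P^\ell=P^{(\ell)}$), because $I_r(X)\not\subseteq P$. To establish this primariness I would use the isomorphism $\varphi\colon R\to K[Y\cdot Z]$ of Lemma~\ref{Lemma:Lasagna}, under which $\varphi(P)=\delta_Y I_r(Z)$, so that $\varphi(P^\ell)=\delta_Y^\ell I_r(Z)^\ell$ becomes, up to the principal factor $\delta_Y^\ell$, a power of the ideal of maximal minors of the $r\times n$ matrix $Z$. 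Here one may exploit the classical fact that for ideals of maximal minors the symbolic and ordinary powers coincide and the powers are primary; the delicate point, and the genuine obstacle, is transferring this primariness from the polynomial ring $K[Y,Z]$ to the subring $K[Y\cdot Z]$. Alternatively, the same conclusion can be reached through the divisor class group: $R$ is normal with $\Cl(R)\cong\ZZ$, and since $\height I_r(X)R\ge 2$ the relation $PQ=\delta I_r(X)$ gives $[Q]=-[P]$, so $(Q^\ell)^{-1}$ is divisorial of class $\ell[P]$; identifying it with $\delta^{-\ell}P^{(\ell)}$ by a degree comparison (or by localizing at $\delta$, where $P$, $Q$ and $I_r(X)$ all become the unit ideal) reduces the theorem to the equality $P^{(\ell)}=P^\ell$ again.
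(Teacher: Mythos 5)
Your proposal shares its skeleton with the paper's proof: both reduce the statement to $\tr(Q^\ell)=Q^\ell(Q^\ell)^{-1}$ and then to the determination $(Q^\ell)^{-1}=(\delta^{-1}P)^{\ell}$, with Corollary~\ref{Cor:Lasagna} as the only computational input. The divergence is in how that determination is finished. The paper localizes: since $\height(I_r(X)R)>1$, the identity $Q^\ell(\delta^{-1}P)^\ell=I_r(X)^\ell$ becomes $Q^\ell R_L\cdot(\delta^{-1}P)^\ell R_L=R_L$ at every height-one prime $L$, so $(Q^\ell)^{-1}$ and $(\delta^{-1}P)^\ell$ agree after localization at all height-one primes, and equality follows because both ideals are divisorial. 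You instead argue element-wise: for $x\in(Q^\ell)^{-1}$ the element $y=x\delta^\ell$ satisfies $yI_r(X)^\ell\subseteq P^\ell$, and a nonzerodivisor modulo $P^\ell$ lying in $I_r(X)$ then forces $y\in P^\ell$. This is a correct and somewhat more elementary finish, but it needs that no associated prime of $P^\ell$ contains $I_r(X)$.

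The hypothesis you flag as ``the genuine obstacle'' -- that $P^\ell$ is $P$-primary, equivalently $P^{(\ell)}=P^\ell$ -- is precisely the fact the paper invokes without proof when it asserts that $(\delta^{-1}P)^\ell$ is a divisorial ideal. Indeed, for the height-one prime $P$ of the normal domain $R$, the divisorial closure of $P^\ell$ is $\bigcap_{\height L=1}P^\ell R_L=P^\ell R_P\cap R=P^{(\ell)}$, so ``$P^\ell$ divisorial'', ``$P^\ell$ primary'', and ``$P^{(\ell)}=P^\ell$'' are one and the same statement. (Note that $(Q^\ell)^{-1}$, being a dual, is automatically divisorial, so the statement for $P$ is the only essential divisoriality input in the paper's proof as well; like you, it does not really need the corresponding fact for $Q$.) This equality of ordinary and symbolic powers of $P$ and $Q$ is a classical fact about determinantal rings, provable by standard monomial (ASL) theory; see Bruns and Vetter, \emph{Determinantal Rings}. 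With that citation your argument is complete, and neither of your two sketched repairs needs to be carried out: the transfer of primariness along $K[Y\cdot Z]\subseteq K[Y,Z]$ is indeed delicate (that extension is not flat), and your class-group alternative, like the paper's own argument, just reduces to $P^{(\ell)}=P^\ell$ again. In short, the two proofs are essentially equivalent and rest on the same hidden classical ingredient; yours makes that ingredient explicit, while the paper buries it in the word ``divisorial''.
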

\begin{proof}
	By Corollary \ref{Cor:Lasagna} we have $Q^{\ell}(\delta^{-1}P)^{\ell}=I_r(X)^{\ell}$. On \cite[Page 315]{BH} it is noted that $\height(I_r(X)R)>1$. Therefore, for any prime ideal $L$ of $R$ of height one, we have $Q^{\ell}R_L(\delta^{-1}P)^{\ell}R_L=R_L$. This implies that $(Q^{\ell})^{-1}R_L=(\delta^{-1}P)^{\ell}R_L$. Since $Q^{\ell}$ and $(\delta^{-1}P)^{\ell}$ are divisorial ideals and the previous equation holds for all height one prime ideals $L$ of $R$, it follows that $(Q^{\ell})^{-1}=(\delta^{-1}P)^{\ell}$. Here we use the fact that if $I$ is a divisorial ideal, then $I=\bigcap_{\mathfrak{p}}IR_\mathfrak{p}$, where the intersection is taken over all height one prime ideals $\mathfrak{p}$ of $R$.
	
	Since $Q^\ell$ has positive grade, by \cite[Lemma 1.1]{HHS1} $\tr(Q^\ell)=Q^\ell(Q^\ell)^{-1}$, and hence the desired result follows.
\end{proof}

\begin{Corollary}
	\label{equal}
	Suppose the determinantal ring $R$ is not Gorenstein. Then the non-Gorenstein  locus and the singular locus of $R$ coincide.
\end{Corollary}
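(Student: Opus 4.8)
The plan is to show that both the non-Gorenstein locus and the singular locus of $R$ coincide with the closed subset $V(I_r(X)R)$ of $\Spec R$, so the whole argument reduces to comparing each locus with this one ideal. First I would recall the general principle, already invoked in the introduction, that the canonical trace detects the non-Gorenstein locus: for a prime $\pp$ of $R$ the localization $R_\pp$ is Gorenstein exactly when $(\omega_R)_\pp$ is free, and this happens precisely when $\tr(\omega_R)\not\subseteq\pp$. Hence the non-Gorenstein locus is $V(\tr(\omega_R))$, and by Theorem~\ref{generic} this equals $V(I_r(X)^{n-m}R)$.

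Next I would dispose of the exponent. Since $R$ is assumed not to be Gorenstein, we have $\tr(\omega_R)\ne R$; in view of the formula of Theorem~\ref{generic} this forces $n-m\ge 1$, for otherwise the exponent would be $0$ and $\tr(\omega_R)$ would equal all of $R$. Because raising an ideal to a positive power does not alter its radical, $\sqrt{I_r(X)^{n-m}R}=\sqrt{I_r(X)R}$, and therefore $V(I_r(X)^{n-m}R)=V(I_r(X)R)$. This identifies the non-Gorenstein locus with $V(I_r(X)R)$.

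Finally I would invoke the classical description of the singular locus of a generic determinantal ring: $R=K[X]/I_{r+1}(X)$ is regular at exactly those primes at which some $r$-minor of $X$ becomes a unit, so that $\Sing(R)=V(I_r(X)R)$ (see, e.g., \cite{BH}). Comparing this with the previous paragraph yields that the non-Gorenstein locus and $\Sing(R)$ both equal $V(I_r(X)R)$, which is the assertion. The step I expect to be the only non-formal ingredient is this last one, the computation of $\Sing(R)$; everything preceding it is a direct consequence of Theorem~\ref{generic} combined with the standard characterization of the Gorenstein locus through the canonical trace, together with the elementary observation that ``$R$ not Gorenstein'' already guarantees $n>m$.
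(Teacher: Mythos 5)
Your proof is correct and follows essentially the same route as the paper: both identify the non-Gorenstein locus with $V(\tr(\omega_R))$ via \cite[Lemma 2.1]{HHS1}, use Theorem~\ref{generic} together with the observation that non-Gorensteinness forces $n>m$ (so taking radicals kills the exponent $n-m$), and then quote the classical description of the singular locus $V(I_r(X)R)$ from \cite[Proposition 7.3.4]{BH}. The only cosmetic difference is that the paper deduces $n>m$ from the standard fact that $R$ is Gorenstein if and only if $m=n$, whereas you deduce it from $\tr(\omega_R)\neq R$ and the trace formula itself; both are valid.
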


\begin{proof}
	It is noted in \cite[Lemma 2.1]{HHS1} that the canonical trace determines the non-Gorenstein locus. Our determinantal ring $R$ is Gorenstein if and only if $m=n$. Hence, if $R$ is not Gorenstein, Theorem~\ref{generic}  implies that $\sqrt{\tr(\omega_R)}=I_r(X)R$. By \cite[Proposition 7.3.4]{BH} $I_r(X)R$ determines the singular locus. This implies the assertion. 
\end{proof}

\section{The Teter number of determinantal rings}

In \cite{GHHM} the notion of \textit{Teter ring} was introduced. Let $R$ be a  Cohen--Macaulay ring which is either a local ring with canonical module or a  graded $K$-algebra over a field $K$. Since by definition $\tr(\omega_R)=\sum_{\varphi\in\textup{Hom}_R(\omega_R,R)}\varphi(\omega_R)$, it is natural to determine the smallest number $t$ of maps $\varphi_1,\dots,\varphi_t\in\textup{Hom}_R(\omega_R,R)$ such that $\tr(\omega_R)=\sum_{i=1}^{t}\varphi_i(\omega_R)$. We call such a number, the \textit{Teter number} of $R$, and denote it by $\Tet(R)$. If $\Tet(R)=1$, $R$ is called a ring of \textit{Teter type}.

The purpose of this section is to determine the Teter number of a determinantal ring. For this aim, we adopt the following strategy. Recall that a Cohen--Macaulay $R$ is \emph{generically Gorenstein} if $R_\mathfrak{p}$ is Gorenstein for all minimal prime ideals $\mathfrak{p}$ or $R$. If this is the case, then $\omega_R$ can be identified with an ideal of height one in $R$. Hence $\tr(\omega_R)=\omega_R\cdot\omega_R^{-1}$, by \cite[Lemma 1.1]{HHS1}. Since $\textup{Hom}_R(\omega_R,R)\cong\omega_R^{-1}$, each map $\varphi:\omega_R\rightarrow R$ is multiplication by some element $x\in\omega_R^{-1}$. Hence, the Teter number is the smallest number $t$ such that there exist $x_1,\dots,x_t\in\omega_R^{-1}$ with
$$
x_1\omega_R+\dots+x_t\omega_R=\tr(\omega_R).
$$
Set $J=(x_1,\dots,x_t)$. Therefore, $J\omega_R=\tr(\omega_R)$. Let $\mu(\omega_R^{-1})$ be the minimal number of generators of $\omega_R^{-1}$. Since $\omega_R^{-1}\omega_R=\tr(\omega_R)$, our discussion yields the following upper bound:
\begin{Proposition}
	Let $R$ be a ring as introduced above. Then $\Tet(R)\le\mu(\omega_R^{-1})$.
\end{Proposition}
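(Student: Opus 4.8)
The plan is to unwind the definitions that precede the statement and make the final bound essentially immediate. We are to show $\Tet(R)\le\mu(\omega_R^{-1})$ for a Cohen--Macaulay ring $R$ (local with canonical module, or graded over a field) that is generically Gorenstein, so that $\omega_R$ can be realized as a height-one ideal. The key structural facts have all been assembled in the discussion directly above: since $R$ is generically Gorenstein, $\tr(\omega_R)=\omega_R\cdot\omega_R^{-1}$ by \cite[Lemma 1.1]{HHS1}, and $\Hom_R(\omega_R,R)\cong\omega_R^{-1}$, so every homomorphism $\varphi\colon\omega_R\to R$ is multiplication by some $x\in\omega_R^{-1}$.

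First I would set $t=\mu(\omega_R^{-1})$ and pick a minimal generating set $x_1,\dots,x_t$ of $\omega_R^{-1}$. The identity $\omega_R^{-1}\omega_R=\tr(\omega_R)$ then reads
\[
\tr(\omega_R)=\Big(\sum_{i=1}^t x_i R\Big)\omega_R=\sum_{i=1}^t x_i\omega_R.
\]
Each summand $x_i\omega_R$ is exactly $\varphi_i(\omega_R)$ for the homomorphism $\varphi_i$ given by multiplication by $x_i$. Hence $\tr(\omega_R)=\sum_{i=1}^t\varphi_i(\omega_R)$ is realized by $t$ maps in $\Hom_R(\omega_R,R)$, and by the very definition of the Teter number as the \emph{smallest} such number, we conclude $\Tet(R)\le t=\mu(\omega_R^{-1})$.

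There is essentially no obstacle here: the content of the statement is entirely captured by the preparatory paragraph, and the proof is just the observation that a \emph{minimal} generating set of $\omega_R^{-1}$ yields a specific admissible family of $\mu(\omega_R^{-1})$ homomorphisms whose images sum to $\tr(\omega_R)$. The one point worth stating carefully is that the correspondence $\omega_R^{-1}\cong\Hom_R(\omega_R,R)$ sends the generator $x_i$ precisely to the multiplication map $\varphi_i$, so that $x_i\omega_R=\varphi_i(\omega_R)$; this uses only that the isomorphism is given by viewing elements of the fraction field that multiply $\omega_R$ into $R$ as $R$-linear maps on $\omega_R$, which is exactly how the identification was set up above. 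No further machinery is needed.
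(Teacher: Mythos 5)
Your proof is correct and follows exactly the paper's own (implicit) argument: the paper derives the bound from the same observation that a minimal generating set $x_1,\dots,x_t$ of $\omega_R^{-1}$, viewed via $\Hom_R(\omega_R,R)\cong\omega_R^{-1}$ as multiplication maps, yields $\tr(\omega_R)=\omega_R^{-1}\omega_R=\sum_{i=1}^t x_i\omega_R$. There is no difference in substance between your write-up and the discussion preceding the Proposition in the paper.
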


Below we discuss a special case where this upper bound is reached.
\begin{Lemma}\label{Lemma:TeterMu}
	Let $R$ be a ring as introduced above. Assume that $\mu(\tr(\omega_R))=\mu(\omega_R)\mu(\omega_R^{-1})$. Then $\Tet(R)=\mu(\omega_R^{-1})$.
\end{Lemma}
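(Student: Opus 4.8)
The upper bound $\Tet(R)\le\mu(\omega_R^{-1})$ is already available from the preceding Proposition, so the plan is to prove the reverse inequality $\Tet(R)\ge\mu(\omega_R^{-1})$ and then invoke the hypothesis to force equality. I would write $t=\Tet(R)$ and choose elements $x_1,\dots,x_t\in\omega_R^{-1}$ that realize the Teter number, so that, as recalled before the statement, $\tr(\omega_R)=\omega_R\cdot\omega_R^{-1}=\sum_{i=1}^t x_i\omega_R$.

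The key step is a counting inequality. If $g_1,\dots,g_s$ is a minimal generating set of $\omega_R$ with $s=\mu(\omega_R)$, then for each $i$ the products $x_ig_1,\dots,x_ig_s$ generate the ideal $x_i\omega_R$, whence $\mu(x_i\omega_R)\le\mu(\omega_R)$. Since the minimal number of generators of a sum of ideals is at most the sum of the minimal numbers of generators, I obtain
\[
\mu(\tr(\omega_R))=\mu\Big(\sum_{i=1}^t x_i\omega_R\Big)\le\sum_{i=1}^t\mu(x_i\omega_R)\le t\,\mu(\omega_R).
\]

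Now the hypothesis $\mu(\tr(\omega_R))=\mu(\omega_R)\mu(\omega_R^{-1})$ turns this into $\mu(\omega_R)\mu(\omega_R^{-1})\le t\,\mu(\omega_R)$. As $\omega_R\ne 0$ we have $\mu(\omega_R)>0$, and dividing yields $\mu(\omega_R^{-1})\le t=\Tet(R)$. Combining this with the upper bound from the Proposition gives $\Tet(R)=\mu(\omega_R^{-1})$, as claimed. I do not expect a genuine obstacle in this argument: it rests entirely on the elementary bound $\mu(\tr(\omega_R))\le\Tet(R)\,\mu(\omega_R)$, and the role of the hypothesis is precisely to make this bound tight. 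The only points requiring care are that the chosen $x_i$ lie in $\omega_R^{-1}$ and that $\tr(\omega_R)=\omega_R\cdot\omega_R^{-1}$, both of which are guaranteed by the generically Gorenstein setup recalled at the beginning of the section.
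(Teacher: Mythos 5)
Your proposal is correct and is essentially the paper's own argument: both rest on the counting inequality $\mu(\tr(\omega_R))\le \Tet(R)\,\mu(\omega_R)$ (the paper writes it as $\mu(\tr(\omega_R))\le\mu(J)\mu(\omega_R)$ for the ideal $J$ generated by the realizing elements) together with the hypothesis and the upper bound from the preceding Proposition. The only difference is presentational: the paper argues by contradiction, while you prove the lower bound $\mu(\omega_R^{-1})\le\Tet(R)$ directly.
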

\begin{proof}
	Suppose by contradiction that $\Tet(R)<\mu(\omega_R^{-1})$. Then, we can find $J\subseteq\omega_R^{-1}$ with $\mu(J)<\mu(\omega_R^{-1})$ and $J\omega_R=\tr(\omega_R)$. Therefore,
	$$
	\mu(\tr(\omega_R))\le\mu(J)\mu(\omega_R)<\mu(\omega_R^{-1})\mu(\omega_R)=\mu(\tr(\omega_R)).
	$$
	A contradiction. Since we also have $\Tet(R)\le\mu(\omega_R^{-1})$, the conclusion follows. 
\end{proof}

\begin{Theorem}
	\label{Thm:TetGeneric}
	Let $K$ be a field, and let $X=(x_{ij})$ be an $m\times n$ matrix of indeterminates with $m<n$. Let $1<r+1\le m$ and $R=K[X]/I_{r+1}(X)$. Then 
	\[
	\Tet(R)=\det\bigg[\binom{2n-m-j}{n-i}\bigg]_{1\le i,j\le r}.
	\]
\end{Theorem}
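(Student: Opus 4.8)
The plan is to reduce Theorem~\ref{Thm:TetGeneric} to Lemma~\ref{Lemma:TeterMu} by verifying its hypothesis for our determinantal ring, and then to identify $\mu(\omega_R^{-1})$ with the stated determinant. Recall from Section~1 that $\omega_R=Q^{n-m}$, so $\omega_R^{-1}=(\delta^{-1}P)^{n-m}$ and $\tr(\omega_R)=I_r(X)^{n-m}R$ by Theorem~\ref{generic}. Thus the whole argument hinges on showing the multiplicative identity
\begin{equation}\label{eq:TetHypo}
\mu(I_r(X)^{n-m}R)=\mu(Q^{n-m})\,\mu((\delta^{-1}P)^{n-m}),
\end{equation}
and on computing $\mu((\delta^{-1}P)^{n-m})=\mu(P^{n-m})$, the latter equality being immediate since multiplication by the nonzerodivisor $\delta^{-1}$ is an isomorphism of modules.

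First I would establish \eqref{eq:TetHypo}. By Corollary~\ref{Cor:Lasagna} we have $PQ=\delta I_r(X)$, hence $(PQ)^\ell=\delta^\ell I_r(X)^\ell$ for every $\ell\ge1$; taking $\ell=n-m$ gives $\mu(I_r(X)^{n-m}R)=\mu((PQ)^{n-m})$. Now Lemma~\ref{Lemma:Lasagna} yields $\mu((PQ)^{n-m})=\mu(P^{n-m})\mu(Q^{n-m})$. Combining these with $\mu((\delta^{-1}P)^{n-m})=\mu(P^{n-m})$ produces exactly \eqref{eq:TetHypo}. Therefore the hypothesis of Lemma~\ref{Lemma:TeterMu} is satisfied, and we conclude $\Tet(R)=\mu(\omega_R^{-1})=\mu(P^{n-m})$. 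It is worth noting that $R$ is generically Gorenstein precisely because $m<n$, so $\omega_R$ indeed identifies with a height-one ideal and the framework of Section~2 applies.

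The remaining and genuinely computational task is to show that $\mu(P^{n-m})$ equals the displayed determinant. Here $P$ is generated by the $r$-minors of the first $r$ rows of $X$, and under the isomorphism $\varphi$ of Lemma~\ref{Lemma:Lasagna} we have $\varphi(P)=\delta_Y I_r(Z)$ where $Z$ is an $r\times n$ generic matrix. Thus $\mu(P^{n-m})=\mu(I_r(Z)^{n-m})$, reducing the problem to counting the minimal generators of a power of the maximal minors of a generic $r\times n$ matrix. Since $I_r(Z)$ defines a generic determinantal ring of the same maximal-minor type, its powers have a well-understood module structure, and the minimal number of generators of $I_r(Z)^{n-m}$ is governed by the Hilbert series / Gröbner-basis description of powers of maximal minors. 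I expect this count to coincide with the number of standard bitableaux of shape dictated by $(n-m)$, which by a Lindström--Gessel--Viennot type lattice-path argument evaluates to the determinant $\det\big[\binom{2n-m-j}{n-i}\big]_{1\le i,j\le r}$.

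The main obstacle is precisely this last combinatorial evaluation: translating $\mu(I_r(Z)^{n-m})$ into a determinant of binomial coefficients. Rather than carry out the lattice-path computation from scratch, the cleanest route is to invoke the result of \cite[Proposition 4.1]{BRW}, which already records $\mu(\omega_R^{-1})$ for generic determinantal rings as exactly this determinant. Once the hypothesis of Lemma~\ref{Lemma:TeterMu} is verified as above, citing \cite[Proposition 4.1]{BRW} for the value of $\mu(\omega_R^{-1})$ completes the proof. The delicate point to get right is the bookkeeping of indices $m$, $n$, $r$ so that the determinant matches the cited normalization, but no new ideas beyond \eqref{eq:TetHypo} and the cited generator count are needed.
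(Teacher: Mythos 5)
Your proposal is correct and follows essentially the same route as the paper: verify the hypothesis of Lemma~\ref{Lemma:TeterMu} via Corollary~\ref{Cor:Lasagna} and Lemma~\ref{Lemma:Lasagna}, conclude $\Tet(R)=\mu(\omega_R^{-1})=\mu(P^{n-m})$, and cite \cite[Proposition 4.1]{BRW} for the determinantal formula. The only quibble is your side remark that $R$ is generically Gorenstein ``because $m<n$''; it is generically Gorenstein because it is a Cohen--Macaulay domain, while $m<n$ is what makes $R$ non-Gorenstein.
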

\begin{proof}
	By using the notation introduced before Corollary \ref{Cor:Lasagna}, we know that $\omega_R=Q^{n-m}$, $\omega_R^{-1}=(\delta^{-1}P)^{n-m}$ and $\tr(\omega_R)=I_r(X)^{n-m}$. Obviously, $\mu(\omega_R^{-1})=\mu(P^{n-m})$ and $\mu(\tr(\omega_R))=\mu(\delta^{n-m}I_r(X)^{n-m})$. By Lemma \ref{Lemma:Lasagna} $\mu(P^{n-m})\mu(Q^{n-m})=\mu((PQ)^{n-m})$. By Lemma \ref{Lemma:TeterMu} it follows that $\Tet(R)=\mu(P^{n-m})$ and using \cite[Proposition 4.1]{BRW} we get the formula in the statement.
\end{proof}

We conclude this section with the following remark.
\begin{Remark}
	\rm Suppose $R$ is generically Gorenstein and that $\omega_R$ is a divisorial ideal. Let $J\subseteq\omega_R^{-1}$ such that $J\omega_R=\tr(\omega_R)$. Then $J\omega_R=\omega_R^{-1}\omega_R$. Hence,
	$$
	\omega_R:(\omega_RJ)=\omega_R:(\omega_R\omega_R^{-1}).
	$$
	On the other hand $\omega_R:(\omega_RJ)=(\omega_R:\omega_R):J=R:J=J^{-1}$ and $\omega_R:(\omega_R\omega_R^{-1})=(\omega_R:\omega_R):\omega_R^{-1}=R:\omega_R^{-1}=(\omega_R^{-1})^{-1}=\omega_R$, because we assumed that $\omega_R$ is divisorial. Therefore, $J^{-1}=\omega_R$. Because $J$ may not be divisorial, we only obtain the following inclusion,
	$$
	J\subseteq(J^{-1})^{-1}=\omega_R^{-1}.
	$$
	Hence, if $J\omega_R=\tr(\omega_R)$, the divisorial closure $(J^{-1})^{-1}$ of $J$ has to be equal to the anti-canonical module $\omega_R^{-1}$, but $J$ itself may be a proper sub-ideal of $\omega_R^{-1}$. 
\end{Remark}

\section{Reduction of  traces}

Let $R$ be a local ring or a  positively graded  graded $K$-algebra  which is Cohen--Macaulay and admits a canonical module $\omega_R$. Let $\xb=x_1,\ldots,x_m$  be a (graded) sequence of elements in $R$, and set $\overline{M}=M/(\xb)M$. It is easy to see that $\tr(\omega_R)\overline{R}\subseteq \tr(\omega_{\overline{R}})$, if ${\bf x}$ is an $R$-sequence. The question is whether this inclusion may be strict.

When $R$ is generically Gorenstein, then $\omega_R$ may be identified with an ideal, and we may consider the powers  of $\omega_R$.

\begin{Theorem}
	\label{reduction}
	Let $R$ be a ring as introduced above, and assume that $R$ is generically Gorenstein.  Then 
	\[
	\tr(\omega_R)\overline{R}= \tr(\omega_{\overline{R}}),
	\]
	if $\omega_R^2$ is Cohen--Macaulay and $\overline{R}$  is generically Gorenstein as well. 
\end{Theorem}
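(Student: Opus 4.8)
The plan is to reduce the whole statement to a single inclusion of fractional ideals and to exploit the identity $\omega_R^{-1}=\Hom_R(\omega_R^2,\omega_R)$, which rewrites the anti-canonical module as a $\Hom$ out of the module $\omega_R^2$; this is exactly where the Cohen--Macaulay hypothesis on $\omega_R^2$ will enter. Throughout I identify $\omega_R$ and $\omega_{\overline{R}}$ with ideals of positive grade, which is legitimate since $R$ and $\overline{R}$ are generically Gorenstein, so that $\tr(\omega_R)=\omega_R\omega_R^{-1}$ and $\tr(\omega_{\overline{R}})=\omega_{\overline{R}}\omega_{\overline{R}}^{-1}$ by \cite[Lemma 1.1]{HHS1}. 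Since $R$ is Cohen--Macaulay and $\xb$ is an $R$-sequence, $\xb$ is regular on every maximal Cohen--Macaulay $R$-module; in particular it is regular on $\omega_R$ and, by hypothesis, on $\omega_R^2$. Hence the canonical module specializes, $\omega_{\overline{R}}=\omega_R/\xb\omega_R=\overline{\omega_R}$, and the always-available inclusion $\tr(\omega_R)\overline{R}\subseteq\tr(\omega_{\overline{R}})$ reduces the problem to the reverse containment $\tr(\omega_{\overline{R}})\subseteq\tr(\omega_R)\overline{R}$. Writing $\tr(\omega_R)\overline{R}=\omega_{\overline{R}}\cdot\overline{\omega_R^{-1}}$, it will suffice to prove the inclusion of fractional ideals $\omega_{\overline{R}}^{-1}\subseteq\overline{\omega_R^{-1}}$ in $Q(\overline{R})$ and then multiply by $\omega_{\overline{R}}$.

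The conceptual heart of the argument is the identity
\[
\omega_R^{-1}=\Hom_R(\omega_R^2,\omega_R),
\]
valid for any generically Gorenstein Cohen--Macaulay ring with canonical ideal, and equally for $\overline{R}$. I would prove it by passing to fractional ideals in the total quotient ring and using that the homothety $R\to\End_R(\omega_R)$ is an isomorphism: for $q\in Q(R)$ one has $q\omega_R^2\subseteq\omega_R$ if and only if $(q\omega_R)\omega_R\subseteq\omega_R$, i.e.\ $q\omega_R\subseteq\End_R(\omega_R)=R$, i.e.\ $q\in\omega_R^{-1}$. Note that only $\End_R(\omega_R)=R$ is used, so no normality or domain hypothesis is needed.

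Finally I would put the hypothesis to work. Because $\omega_R^2$ is a maximal Cohen--Macaulay module and $\omega_R$ is the canonical module, $\Ext^i_R(\omega_R^2,\omega_R)=0$ for all $i\ge1$. As $\xb$ is regular on $\omega_R^2$ and on $\omega_R$, the functor $\Hom_R(\omega_R^2,-)$ commutes with reduction modulo $\xb$, whence
\[
\overline{\omega_R^{-1}}=\overline{\Hom_R(\omega_R^2,\omega_R)}\cong\Hom_{\overline{R}}(\omega_R^2/\xb\omega_R^2,\,\omega_{\overline{R}}).
\]
The natural surjection $\omega_R^2/\xb\omega_R^2\twoheadrightarrow\omega_R^2\overline{R}=\omega_{\overline{R}}^2$ then induces an injection $\Hom_{\overline{R}}(\omega_{\overline{R}}^2,\omega_{\overline{R}})\hookrightarrow\Hom_{\overline{R}}(\omega_R^2/\xb\omega_R^2,\omega_{\overline{R}})$, that is $\omega_{\overline{R}}^{-1}\hookrightarrow\overline{\omega_R^{-1}}$, where I invoke the identity of the second paragraph for $\overline{R}$. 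This is precisely the inclusion demanded in the first paragraph, and multiplying through by $\omega_{\overline{R}}$ completes the proof.

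I expect the main obstacle to be the bookkeeping that makes the two descriptions of $\overline{\omega_R^{-1}}$ compatible as fractional ideals in $Q(\overline{R})$, so that the maps above really are inclusions of fractional ideals and can be multiplied by $\omega_{\overline{R}}$. Here I would use that every module of the form $\Hom_{\overline{R}}(-,\omega_{\overline{R}})$ is torsion-free, since $\omega_{\overline{R}}$ is an ideal of positive grade, so that $\overline{\omega_R^{-1}}$ embeds canonically into $Q(\overline{R})$ and the displayed maps, being generic isomorphisms of rank-one modules, are honest containments. By contrast, the homological input—the vanishing $\Ext^{>0}_R(\omega_R^2,\omega_R)=0$ coming from maximal Cohen--Macaulayness, and the resulting base change—is routine once the identity $\omega_R^{-1}=\Hom_R(\omega_R^2,\omega_R)$ is established.
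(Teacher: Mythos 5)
Your proposal has the same skeleton as the paper's proof: the identity $\omega_R^{-1}\iso\Hom_R(\omega_R^2,\omega_R)$ (this is exactly Lemma~\ref{based}, and your colon-ideal proof of it in $Q(R)$ using $\End_R(\omega_R)=R$ is valid, and arguably cleaner than the paper's tensor--hom argument), then $\Ext_R^{>0}(\omega_R^2,\omega_R)=0$ from maximal Cohen--Macaulayness, base change along $\xb$, and a second application of the identity over $\overline{R}$. The genuine gap is in how you pass to $\overline{R}$: you identify the canonical module of $\overline{R}$ with \emph{image ideals}, writing $\omega_{\overline{R}}=\omega_R/\xb\,\omega_R=\overline{\omega_R}$ and then invoking a ``natural surjection'' $\omega_R^2/\xb\,\omega_R^2\to \omega_R^2\overline{R}=\omega_{\overline{R}}^2$ (the factorization $\tr(\omega_R)\overline{R}=\omega_{\overline{R}}\cdot\overline{\omega_R^{-1}}$ presupposes the same identification). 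But the natural surjection $\omega_R/\xb\,\omega_R\to\omega_R\overline{R}$ is injective if and only if $\Tor_1^R(R/\omega_R,\overline{R})=0$, i.e.\ if and only if $\xb$ is a regular sequence on $R/\omega_R$, and this is \emph{not} among the hypotheses. It genuinely fails under the theorem's hypotheses: take $R=K[X]/I_{r+1}(X)$ with $m<n\le 2m-r$, so $\omega_R=Q^{n-m}$ and $\omega_R^2$ is Cohen--Macaulay, and let $x$ be any nonzero element of $\omega_R$. Then $x$ is $R$-regular ($R$ is a domain), and $\overline{R}=R/xR$ is generically Gorenstein because the non-Gorenstein locus of $R$ has height $>1$, so every minimal prime of $\overline{R}$ comes from a height-one prime of $R$ at which $R$ is Gorenstein; thus all hypotheses hold. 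Yet $Q$ is a height-one prime containing $x$, so $Q\overline{R}$ is a minimal prime of $\overline{R}$, and the image ideal $\omega_R\overline{R}\subseteq Q\overline{R}$ localizes there to a proper ideal of the artinian ring $\overline{R}_{Q\overline{R}}$, whereas $\omega_{\overline{R}}$ localizes to the whole ring (Gorenstein artinian). So $\omega_R\overline{R}$ is not a canonical ideal of $\overline{R}$, $\omega_R^2\overline{R}\neq\omega_{\overline{R}}^2$, and your surjection targets the wrong module. Relatedly, $\overline{\omega_R^{-1}}$ cannot be formed by reducing elements of $Q(R)$: there is no ring map $Q(R)\to Q(\overline{R})$, since nonzerodivisors of $R$ (e.g.\ $x$ itself) become zerodivisors or zero in $\overline{R}$. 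Your closing remark about torsion-freeness of $\Hom$ modules does not repair this, because the problem is not embedding $\overline{\omega_R^{-1}}$ into $Q(\overline{R})$ but the false equality of ideals above.

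What your argument needs (and what the paper uses, also without proof) is the \emph{abstract module} isomorphism $\omega_R^2/\xb\,\omega_R^2\iso\omega_{\overline{R}}^2$ for a canonical ideal $\omega_{\overline{R}}\subseteq\overline{R}$, not an equality of image ideals. This is true under the stated hypotheses, but requires an argument along the following lines: both modules are quotients of $\omega_{\overline{R}}\otimes_{\overline{R}}\omega_{\overline{R}}$ (the first via $\overline{\omega_R}\otimes\overline{\omega_R}\to\overline{\omega_R^2}$, the second via multiplication); both kernels vanish at every minimal prime $P$ of $\overline{R}$, because $\overline{R}_P$ Gorenstein forces $R_{P'}$ Gorenstein by deformation, making the relevant canonical ideals principal and generated by nonzerodivisors; and both quotients have all associated primes minimal --- $\overline{\omega_R^2}$ because it is maximal Cohen--Macaulay over $\overline{R}$ (this is a second, essential use of the hypothesis on $\omega_R^2$), and $\omega_{\overline{R}}^2$ because it is an ideal in the Cohen--Macaulay ring $\overline{R}$. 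Hence both kernels equal the torsion submodule and the quotients are isomorphic. With this replacing your image-ideal identification, your map $\omega_{\overline{R}}^{-1}\to\overline{\omega_R^{-1}}$ becomes the isomorphism $\Hom_{\overline{R}}(\omega_{\overline{R}}^2,\omega_{\overline{R}})\iso\Hom_{\overline{R}}(\overline{\omega_R^2},\overline{\omega_R})$, and the rest of your proof (like the paper's) goes through, modulo the routine check that all isomorphisms are the natural ones and hence compatible with evaluation into $\overline{R}$; the paper sidesteps fractional ideals downstairs entirely by organizing this last step as a commutative diagram of evaluation pairings $\Hom_R(\omega_R,R)\times\omega_R\to R$.
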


The proof of the theorem is based on the following 

\begin{Lemma}
	\label{based}
	Let $R$ be a Cohen--Macaulay ring as before and assume that $R$ is generically Gorenstein. Then 
	\[
	\Hom_R(\omega_R^2,\omega_R)\iso \Hom_R(\omega_R, R).
	\]
\end{Lemma}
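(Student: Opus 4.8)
The plan is to identify $\omega_R$ with a height-one ideal of $R$, which is possible since $R$ is generically Gorenstein, and then construct the isomorphism by a natural multiplication map. First I would set up the key ideas: because $\omega_R$ is identified with an ideal of positive grade, it contains a nonzerodivisor, and so does $\omega_R^2$. For a Cohen--Macaulay ring that is generically Gorenstein, the canonical module behaves like a divisorial/reflexive ideal, and multiplication by elements of $\omega_R$ gives homomorphisms between the relevant ideals. The heart of the argument is to establish that
\[
\Hom_R(\omega_R^2,\omega_R)\iso \omega_R^{-1},
\]
where $\omega_R^{-1}=\Hom_R(\omega_R,R)=(R:\omega_R)$ inside the total ring of fractions.

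The concrete approach I would take is the following. Since $\omega_R$ contains a nonzerodivisor, any $R$-linear map $\varphi\colon \omega_R^2\to\omega_R$ extends uniquely to a map on the total quotient ring and is therefore multiplication by a fixed element $x$ of the total ring of fractions. The condition that $\varphi$ lands in $\omega_R$ means precisely that $x\,\omega_R^2\subseteq\omega_R$, i.e. $x\,\omega_R\subseteq(\omega_R:\omega_R)$. Here I would invoke the standard fact that for a generically Gorenstein Cohen--Macaulay ring $(\omega_R:\omega_R)=R$, so the condition simplifies to $x\,\omega_R\subseteq R$, which says exactly that $x\in\omega_R^{-1}$. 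This gives a bijection $\Hom_R(\omega_R^2,\omega_R)\cong\omega_R^{-1}$ sending $\varphi$ to its multiplier $x$. On the other side, $\Hom_R(\omega_R,R)\cong(R:\omega_R)=\omega_R^{-1}$ by the same reasoning (every such map is multiplication by an element of the total quotient ring sending $\omega_R$ into $R$). Composing these two identifications yields the desired isomorphism, and one checks it is $R$-linear because both are realized as multiplication maps.

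The step I expect to be the main obstacle is justifying $(\omega_R:\omega_R)=R$ and, relatedly, making precise that every homomorphism out of $\omega_R^2$ (respectively $\omega_R$) is multiplication by a unique element of the total ring of fractions. The equality $\End_R(\omega_R)=R$ is where the generically Gorenstein hypothesis is essential: it guarantees that $\omega_R$ is isomorphic to $R$ at every minimal prime, so the natural map $R\to\End_R(\omega_R)$ is an isomorphism (both being reflexive and agreeing in codimension zero). Once this is in hand, the two identifications are routine, and the isomorphism follows formally. I would present the argument by first recording $\Hom_R(\omega_R,R)\cong\omega_R^{-1}$ and $\End_R(\omega_R)\cong R$, then translating a map $\omega_R^2\to\omega_R$ into the containment $x\,\omega_R^2\subseteq\omega_R$ and reducing it to $x\in\omega_R^{-1}$, so that both Hom-modules are canonically identified with $\omega_R^{-1}$.
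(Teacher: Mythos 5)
Your proof is correct, but it follows a genuinely different route from the one in the paper. The paper argues homologically: letting $U$ be the kernel of the multiplication map $\omega_R\tensor_R\omega_R\to\omega_R^2$, it uses generic Gorensteinness to see that $U_P=0$ at every minimal prime $P$, kills $\Hom_R(U,\omega_R)$ via $\Ass\Hom_R(U,\omega_R)=\Supp U\sect\Ass\omega_R=\emptyset$, and then concludes by Hom-tensor adjunction together with $\Hom_R(\omega_R,\omega_R)\iso R$. You instead work inside the total ring of fractions $Q(R)$: every homomorphism out of an ideal containing a nonzerodivisor is multiplication by an element of $Q(R)$, so both Hom-modules become colon ideals, and the statement reduces to the identity $(\omega_R:_{Q(R)}\omega_R^2)=(R:_{Q(R)}\omega_R)$, which follows from $(\omega_R:_{Q(R)}\omega_R)=R$. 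Your route is more elementary (no tensor product, no associated-prime computation) and exhibits the isomorphism concretely, with both Hom-modules identified with the fractional ideal $\omega_R^{-1}$; the paper's route avoids the total-quotient-ring formalism and isolates the generic Gorenstein hypothesis in the single step $\Hom_R(U,\omega_R)=0$. One correction to your commentary: the homothety isomorphism $R\iso\End_R(\omega_R)$, equivalently $(\omega_R:_{Q(R)}\omega_R)=R$ once $\omega_R$ is realized as an ideal, is [BH, Theorem 3.3.4(d)] and holds for every Cohen--Macaulay ring admitting a canonical module; it neither needs generic Gorensteinness nor follows from the principle that reflexive modules agreeing in codimension zero coincide (that principle is false in general, as the inclusion of a non-normal domain into its normalization shows). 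In your argument the generically Gorenstein hypothesis is used only, but essentially, to identify $\omega_R$ with an ideal of positive grade so that the fractional-ideal formalism applies, which parallels the paper's use of the same hypothesis to make $U$ vanish at the minimal primes.
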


\begin{proof}
	Let $U$ be the kernel of the canonical map $\omega_R\tensor_R\omega_R\to \omega_R^2$. Then we obtain the exact sequence
	\[
	0\to \Hom_R(\omega_R^2,\omega_R)\to \Hom_R(\omega_R\tensor_R\omega_R,\omega_R)\to \Hom_R(U,\omega_R)
	\]
	Since $R$ is generically Gorenstein it follows that $\omega_{R_P}\iso R_P$ for all minimal prime ideals $P$ of $R$. This implies that $\omega_R\tensor_R\omega_R\to \omega_R^2$ becomes an isomorphism after localization at a minimal prime ideal of $R$. Hence, $U_P=0$ for all minimal prime ideal $P$ of $R$. Since $\Ass \Hom_R(U,\omega_R)= \Supp U\sect \Ass \omega_R$ and since  $\Ass \omega_R=\Ass R$,  we conclude that $\Ass \Hom_R(U,\omega_R)=\emptyset$. This implies that $\Hom_R(U,\omega_R)=0$. Therefore, 
	\[
	\Hom_R(\omega_R^2,\omega_R)\iso \Hom_R(\omega_R\tensor_R\omega_R,\omega_R). 
	\]
	By using that $\Hom_R(\omega_R, \omega_R)\iso R$, we see that 
	\[
	\Hom_R(\omega_R\tensor_R\omega_R,\omega_R)\iso \Hom_R(\omega_R, \Hom_R(\omega_R,\omega_R))\iso \Hom_R(\omega_R,R).
	\]
	Thus, the desired conclusion follows. 
\end{proof}

\begin{proof}[Proof  of Theorem~\ref{reduction}]
	Consider the commutative diagram
	\begin{center}
		\begin{tikzcd}
			\Hom_R(\omega_R,R)\times \omega_R\arrow[r, "\varphi"]\arrow[d, "\alpha"] & R\arrow[d, "\beta"]\\
			\overline{\Hom_R(\omega_R,R)}\times \overline{\omega_R}\arrow[r, "\psi" ]&  \overline{R} 
		\end{tikzcd}
	\end{center}
	with its natural maps. Then $\Im(\varphi)=\tr(\omega_R)$, 
	and hence $\Im(\beta\circ\varphi)=\tr(\omega_R)\overline{R}$. It follows that $\Im(\psi)=\tr(\omega_R)\overline{R}$. 
	
	Lemma~\ref{based}, implies that 	$\Hom_R(\omega_R,R)\iso \Hom_R(\omega_R^2,\omega_R)$, since we assume that $R$ is generically Gorenstein. By assumption, $\omega_R^2$ is Cohen--Macaulay, and since $\dim \omega_R^2 =\dim R$, it is a maximal Cohen--Macaulay module.
	
	Thus, \cite[Theorem~3.3.10(c)(ii)]{BH} implies that $\Ext^i_R(\omega_R^2,\omega_R)=0$ for $i>0$, and hence  it follows from \cite[Proposition~3.3.3]{BH} that 
	\[
	\overline{\Hom_R(\omega_R^2,\omega_R)}\iso \Hom_{\overline{R}} (\overline{\omega_R^2},\overline{\omega_R})\iso
	\Hom_{\overline{R}}(\omega^2_{\overline{R}},
	\omega_{\overline{R}}).
	\]
	Now we use that $\overline{R}$ is generically Gorenstein, and apply again Lemma~\ref{based} to obtain that $\Hom_{\overline{R}}(\omega^2_{\overline{R}},
	\omega_{\overline{R}})\iso \Hom_{\overline{R}}(\omega_{\overline{R}},\overline{R})$. This shows that $\Im \psi= \tr(\omega_{\overline{R}}), $ and  completes  the proof of the theorem. 
\end{proof}

In the following example, using \textit{Macaulay2} \cite{GDS} we verified that the canonical trace does not specialize in general.
\begin{Example}
	\rm Consider the monomial ideal $I=(x_1y_1,x_2y_2,x_3y_3,x_1x_2,x_2y_3,x_1x_3)\subset S=K[x_1,x_2,x_3,y_1,y_2,y_3]$. Then $R=S/I$ is a Cohen--Macaulay $K$-algebra and the canonical trace of $R$ is
	$$
	\tr(\omega_R)=(x_{1},x_{2},x_{3}^{2},x_{3}y_{1},x_{3}y_{2},y_{1}y_{2},y_{1}y_{3},y_{2}y_{3},y_{3}^{2})R.
	$$
	The element $x_1-y_1$ is regular on $R$. We set $\overline{R}=R/(x_1-y_1)R$. Then,
	$$
	\tr(\omega_{\overline{R}})=(x_{2},x_{3}^{2},x_{3}y_{2},y_{1},y_{3})\overline{R}.
	$$
	Hence, the canonical trace does not specialize in such a case. In this example $R$ and $\overline{R}$ are both generically Gorenstein,  but $\omega_R^2$ is not Cohen--Macaulay, because $\depth \omega_R^2=2$,  while $\depth R=3$. 
\end{Example}

Let $K$ be a field, and  let $S=K[x_1,\ldots,x_s]$ be the graded polynomial ring with $\deg x_i=d_i>0$  for $i=1,\ldots,s$. Furthermore, let $A=(f_{ij})$ be an $m\times n$-matrix of homogeneous polynomials  of $S$ such that $\deg f_{ij}=a_i-b_j$, where   $a_1,\ldots,a_m$ and $b_1,\ldots,b_n$ positive integers  and where $f_{ij}=0$ if  $a_i-b_j\leq 0$. Under these assumptions all minors of $A$ are homogeneous polynomials. Hence $R=S/I_{r+1}(A)$ is a graded $K$-algebra. We have  $\height I_{r+1}(A)\leq 
\height  I_{r+1}(X)=(n-r)(m-r)$. If equality holds, then $R$ is Cohen--Macaulay.

\begin{Corollary}
	\label{when}
	With the notation introduced, assume that $\height  I_{r+1}(A)=(n-r)(m-r)$, that $R$ is generically Gorenstein and that $n\leq 2m-r$. Then
	\[
	\tr(\omega_R)=I_{r}(A)^{n-m}R.
	\] 
\end{Corollary}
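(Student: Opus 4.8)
The plan is to realise $R$ as a specialisation, modulo a regular sequence, of a generic determinantal ring, and then transport the trace formula of Theorem~\ref{generic} across this specialisation by means of Theorem~\ref{reduction}. To this end I would first introduce generic variables: let $T=S[x_{ij}:i\in[m],\ j\in[n]]$ be the polynomial extension of $S$ by the entries of a generic $m\times n$ matrix $X=(x_{ij})$, and set $\tilde R=T/I_{r+1}(X)$. Since $T=(K[X])[x_1,\dots,x_s]$, the ring $\tilde R$ is simply the polynomial extension $\bigl(K[X]/I_{r+1}(X)\bigr)[x_1,\dots,x_s]$ of the generic determinantal ring; as the canonical module and the trace are compatible with polynomial extension, Theorem~\ref{generic} gives
\[
\tr(\omega_{\tilde R})=I_{r}(X)^{\,n-m}\tilde R.
\]

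Next I would set $g_{ij}=x_{ij}-f_{ij}\in T$. The substitution $x_{ij}\mapsto f_{ij}$ induces an isomorphism $T/(g_{ij})\iso S$ that carries $I_{r+1}(X)$ onto $I_{r+1}(A)$, so $\tilde R/(\gb)\tilde R\iso R$, where $\gb=(g_{ij})_{i,j}$. The first technical point is to show that $\gb$ is a regular sequence on $\tilde R$. Here I would combine the Cohen--Macaulayness of $\tilde R$ with a dimension count: since $\height_T I_{r+1}(X)=(n-r)(m-r)$ one has $\dim\tilde R=s+mn-(n-r)(m-r)$, whereas the hypothesis $\height I_{r+1}(A)=(n-r)(m-r)$ yields $\dim R=s-(n-r)(m-r)$. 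Thus $\gb$ consists of $mn$ elements and cuts the dimension of $\tilde R$ down by exactly $mn$, so in the Cohen--Macaulay ring $\tilde R$ it is a regular sequence. Homogeneity is arranged by the weighted grading $\deg x_{ij}=a_i-b_j$; the structurally zero entries, where $a_i-b_j\le 0$ and $g_{ij}=x_{ij}$, cause no difficulty, and in any event one may localise at the irrelevant maximal ideal and let the final ideal equality descend by graded Nakayama.

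It then remains to verify the hypotheses of Theorem~\ref{reduction} with $\tilde R$ in the role of $R$ and our $R$ in the role of $\overline R$. The ring $\tilde R$ is a domain, being a polynomial extension of the generic determinantal ring, hence it is generically Gorenstein; the ring $R=\overline{\tilde R}$ is generically Gorenstein by assumption; and $\omega_{\tilde R}^2$ is Cohen--Macaulay because, by \cite[Theorem~4.3]{BRW} together with the standing hypothesis $n\le 2m-r$, the square of the canonical module of a generic determinantal ring is Cohen--Macaulay in precisely this range, a property preserved by the polynomial extension. Theorem~\ref{reduction} now gives $\tr(\omega_{\tilde R})\,R=\tr(\omega_R)$. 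Reducing the equality $\tr(\omega_{\tilde R})=I_{r}(X)^{n-m}\tilde R$ modulo $\gb$, and using that $x_{ij}\mapsto f_{ij}$ sends $I_r(X)$ to $I_r(A)$, the left-hand side becomes $I_r(A)^{n-m}R$, whence $\tr(\omega_R)=I_r(A)^{n-m}R$.

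I expect the main obstacle to be the regular-sequence verification and the accompanying grading bookkeeping. One must ensure that the height hypothesis on $I_{r+1}(A)$ is exactly what forces $\gb$ to lower the dimension of the Cohen--Macaulay ring $\tilde R$ by its full length $mn$, and one must deal carefully with the non-positively graded ``zero'' entries so that Theorem~\ref{reduction}, stated for local or positively graded rings, genuinely applies (localising at the irrelevant maximal ideal and invoking graded Nakayama is the clean device). Once $\gb$ is known to be a regular sequence and $\omega_{\tilde R}^2$ is confirmed Cohen--Macaulay, the remainder is a direct application of Theorems~\ref{generic} and~\ref{reduction}.
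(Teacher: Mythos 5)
Your proposal is correct and follows essentially the same route as the paper: both realise $R$ as the quotient of $R'=S[X]/I_{r+1}(X)S[X]$ by the sequence $\gb$ of elements $x_{ij}-f_{ij}$, observe that $R'$ is a Cohen--Macaulay domain (hence generically Gorenstein) and that $\omega_{R'}^2$ is Cohen--Macaulay by \cite[Theorem 4.3]{BRW} precisely when $n\le 2m-r$, and then combine Theorem~\ref{generic} with Theorem~\ref{reduction}. Your dimension-count verification that $\gb$ is a regular sequence on $R'$ (which is where the hypothesis $\height I_{r+1}(A)=(n-r)(m-r)$ enters) and your care with the non-positively graded entries are details the paper asserts or glosses over, but they are exactly the intended justifications.
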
	

\begin{proof}
	Let $X$ be the $m\times n$ matrix whose  entries are  the indeterminates $x_{ij}$,   $i=1,\ldots,m$ and $j=1,\ldots,n$, and let $S'=S[X]$ and $R'=S'/I'$, where $I'=I_{r+1}(X)S'$. We set $\deg x_{ij}=b_j-a_i$ for all $i$ and $j$. Then $I'$ is a  graded ideal in the non-standard graded polynomial ring $S'$,  and  $R$ is a specialization of $R'$. In other words, the sequence $\gb=x_{11}-f_{11}, \ldots, x_{n-1,n}-f_{n-1,n}$  is a homogeneous  regular sequence on $S'$ and $R'$ with $S'/(\gb)=S$ and $R'/(\gb)=R$. 
	
	The ring $R'$ is generically Gorenstein, since it is a  Cohen--Macaulay domain, and $R$ is generically Gorenstein,  by assumption. Therefore, by Theorem~\ref{reduction}, $\tr(\omega_R)=\tr(\omega_{R'}) R= I_{r}(A)^{n-m}R$, if $\omega_{R'}^2$ is Cohen--Macaulay. This condition for  $\omega_{R'}$  is obviously  satisfied  if and only if the square of the canonical module $Q^{n-m}$ of $K[X]/I_{r+1}(X)$ is Cohen--Macaulay. By \cite[Theorem 4.3]{BRW} this is the case if and only if $2(n-m)\leq n-r$, equivalently if $n\leq 2m-r$. This completes the proof of the corollary.  
\end{proof}

As an application, we consider a  graded perfect ideal $I$ of height  $2$  in the polynomial ring $S=K[x_1,\ldots, x_s]$ minimally  generated by $n$ homogeneous elements. Then $R=S/I$ has a graded resolution 
\[
0\to \Dirsum_{j=1}^{n-1} S(-b_j)\stackrel{A}{\To}\Dirsum_{i=1}^n  S(-a_i)\to  R\to 0
\]
with $A$ an $n\times(n+1)$-matrix. 
The Hilbert-Burch theorem \cite[Theorem 1.4.17]{BH} tells us that $I$ is generated by the maximal minors of $A$, which is the same as the maximal minors of $A^T$ - the transpose of $A$. In particular,  $I$ is obtained as a specialization  of the ideal of maximal minors of the generic $(n-1)\times n$ matrix $X$.

The matrix $A$ is called the Hilbert-Burch matrix of $I$ (with respect to the given resolution). 

\begin{Corollary}
	\label{codim2}
	Let  $I$ be a	graded perfect ideal  of height  $2$  in the polynomial ring $S=K[x_1,\ldots, x_s]$ minimally generated by $n$ homogeneous  elements and  with Hilbert-Burch matrix $A$. Let $R=S/I$,  and assume that $R$ is generically Gorenstein.  Then 
	\[
	\tr(\omega_R)=I_{n-2}(A)R.
	\]
\end{Corollary}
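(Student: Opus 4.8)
The plan is to realize $R$ as a specialization of the generic determinantal ring and invoke Theorem~\ref{generic} together with Theorem~\ref{reduction}, mirroring the structure already used in Corollary~\ref{when}. Concretely, since $I$ is a perfect height~$2$ ideal minimally generated by $n$ elements, the Hilbert--Burch theorem presents $R=S/I$ via the $(n-1)\times n$ matrix $A^T$ (equivalently the $n\times(n-1)$ matrix $A$), whose maximal minors generate $I$. This exhibits $I$ as the specialization of $I_{n-1}(X)$ for the generic $(n-1)\times n$ matrix $X$. So I would set $m=n-1$, $r=n-2$, so that the relevant minors are the maximal $(r+1)=(n-1)$-minors, and $I_r(A)=I_{n-2}(A)$ is exactly the ideal appearing in the conclusion.

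First I would set up the generic model precisely: let $X$ be the generic $(n-1)\times n$ matrix with entries $x_{ij}$, form $S'=S[X]$ with a suitable non-standard grading making the entries of $A$ homogeneous (assigning $\deg x_{ij}$ to match the degrees forced by the Hilbert--Burch grading), and put $R'=S'/I_n(X)S'$. Then the sequence $\gb$ of differences $x_{ij}-(A)_{ij}$ is a homogeneous regular sequence on both $S'$ and $R'$, with $R'/(\gb)R'\iso R$. The ring $R'$ is a Cohen--Macaulay domain, hence generically Gorenstein, and $R$ is generically Gorenstein by hypothesis. By Theorem~\ref{generic} applied to $R'$ we have $\tr(\omega_{R'})=I_{n-2}(X)^{n-m}R'=I_{n-2}(X)R'$, since here $n-m=n-(n-1)=1$.

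The key simplification to emphasize is that for maximal minors the exponent $n-m=1$, so $\omega_{R'}=Q$ is itself (identified with) a divisorial ideal and $\omega_{R'}^2=Q^2$. I would then invoke Theorem~\ref{reduction}: to conclude $\tr(\omega_R)=\tr(\omega_{R'})R=I_{n-2}(A)R$ it suffices to know that $\omega_{R'}^2$ is Cohen--Macaulay. By the Corollary~\ref{when} analysis this holds iff $2(n-m)\le n-r$, i.e. $2\le n-(n-2)=2$, which is satisfied with equality automatically. Thus the Cohen--Macaulayness of $\omega_{R'}^2$ comes for free in the maximal-minors case, and Theorem~\ref{reduction} applies directly. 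Specializing $I_{n-2}(X)R'$ along $\gb$ replaces each generic minor by the corresponding minor of $A$, giving $\tr(\omega_R)=I_{n-2}(A)R$.

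The main obstacle I anticipate is bookkeeping rather than conceptual: verifying that the non-standard grading on $S'$ can indeed be chosen so that $\gb$ is a homogeneous regular sequence and that $\height I_n(X)S'$ equals the generic value $(n-(n-2))(  (n-1)-(n-2))=2\cdot1=2$, ensuring $R'$ is Cohen--Macaulay of the correct codimension. This is precisely the hypothesis $\height I_{r+1}(A)=(n-r)(m-r)$ in Corollary~\ref{when}, which here reads $\height I=2$ and is guaranteed by perfection of $I$. The only genuinely new point to check is that specialization carries $I_{n-2}(X)$ to $I_{n-2}(A)$ compatibly with the identification of canonical modules, but this follows formally from $R'/(\gb)R'\iso R$ and the flatness/regular-sequence behaviour already exploited in Corollary~\ref{when}; so I expect the proof to be short, essentially reducing to the special case $n-m=1$ of the earlier corollary.
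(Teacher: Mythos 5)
Your proposal is correct and takes essentially the same route as the paper: the paper's proof is the one-line observation that $R$ is a specialization of $K[X]/I_{n-1}(X)$ with $m=n-1$, $r=n-2$, so that Corollary~\ref{when} applies (its hypothesis $n\le 2m-r$ holding with equality), which is precisely the argument you unfold step by step. One minor slip: in two places you write $I_n(X)$ where you mean the ideal of maximal minors $I_{n-1}(X)$ of the generic $(n-1)\times n$ matrix (as you correctly state earlier), since $I_n(X)=0$ for such a matrix.
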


\begin{proof}
	Since $R$ is the specialization of $K[X]/I_{n-1}(X)$ with $X$ an $(n-1)\times n$-matrix of indeterminates and since  $r=n-2$, Corollary~\ref{when} yields the desired conclusion.
	\end{proof}

\medskip
As a very special case of Theorem~\ref{reduction} we recover a result in \cite{HHS2}. Let $H$ be a numerical semigroup generated by 3 elements, and let $K$ be any field. The numerical semigroup ring $K[H]$ is a $1$-dimensional (non-standard) graded domain. This implies that the defining ideal $I_H$ of $K[H]$ is a graded perfect height 2 prime ideal in the polynomial ring in 3 variables. By \cite{He}, the ideal $I_H$ is generated by $3$ elements, so that its Hilbert-Burch matrix $A_H$ is a $2\times 3$ matrix. Thus, Corollary~\ref{codim2} implies that the canonical trace  of $K[H]$ is generated by the entries of $A_H$ modulo $I_H$, see \cite[Proposition 3.1]{HHS2} for more details. 

\medskip
In \cite[Remark 6.3]{BH1} it is observed  that any perfect monomial ideal of height $2$ arises from a generic perfect monomial ideal of height 2  by specialization. The family of generic perfect monomial ideals of height 2 are parameterized by trees. This has been worked out in detail in \cite{Na}, whose presentation we follow here. To each tree $\Gamma$ on the vertex set  $[n]=\{1,\ldots,n\}$ one assigns an $(n-1)\times n$-matrix $A(\Gamma)=(a_{ij})$, whose entries are either variables or $0$.  One chooses a total order of the edges of $\Gamma$, and assigns to the $k$th edge $\{i,j\}$ of $\Gamma$, $i<j$, the $k$th row of $A(\Gamma)$ as follows: 
\begin{eqnarray*}
	a_{k\ell}=\begin{cases}
		-x_{ij}, &\text{if $\ell=i$}\\ 
		x_{ji}, & \text{if $\ell=j$}\\
		0, & \text{otherwise}.
	\end{cases}
\end{eqnarray*}
Let $I(\Gamma) =I_{n-1}(A(\Gamma))$. Then $I(\Gamma)$ is a perfect    squarefree monomial ideal of height $2$, whose  Hilbert-Burch matrix is the transpose of $A(\Gamma)$. 
The generators of $I(\Gamma)$ are determined as follows: let $i, j$ be two distinct
vertices of $\Gamma$. Then there exists a unique path from $i$ to $j$, that is, a sequence of pairwise distinct numbers $i = i_0, i_1, i_2, \ldots , i_{k-1}, i_k = j$, where $\{i_\ell,i_{\ell+1}\}$ is an edge for $\ell=1,\ldots,k-1$. We set $b(i,j)=i_1$.  Let $v_j$  be the minor of $A(\Gamma)$ which is obtained by omitting the $j$th column of $A(\Gamma)$. Then 
\[
v_j =\pm\prod_{i=1\atop i\neq j}^n x_{ib(i,j)}
\]
for $j=1,2,\ldots,n$, and $I(\Gamma)=(v_1,\ldots,v_n)$. 

The ideal $I(\Gamma)$ is a squarefree monomial ideal, and therefore generically Gorenstein. It follows from  Corollary~\ref{codim2} that the   $(n-2)$-minors of $A(\Gamma)$  modulo $I(\Gamma)$ generate the canonical trace of $R=S/I(\Gamma)$, where $S$ is the polynomial ring over $K$ generated by the entries of $A(\Gamma)$. Expanding the $(n-1)$-minor $v_j$ along the $i$th row, we see that the $n-2$ minors of $A(\Gamma)$ are the monomials 
\[
v_j/x_{ib(i,j)}
\]
with $j=1,\ldots,n$, $i= 1,\ldots n$ and $i\ne j$. 

In general, these monomials do not form a minimal set of generators of the canonical trace.

For a monomial ideal $I$ we denote by $I(x)$ the monomial ideal which is obtained from $I$ by substituting $x$ by $1$ in each  monomial generator of $I$. This operation is a special case of monomial localization. The above discussions give us the following result.

\begin{Corollary}
	\label{monloc}
	Let $\Gamma$ be a tree on the vertex set $[n]$, and let $R=S/I(\Gamma)$, where $S$ is the polynomial ring over $K$ generated by the entries of $A(\Gamma)$. Then 
	\[
	\tr(\omega_R)=\big(\sum I(\Gamma)(x_{ij})\big)/I(\Gamma),
	\]
where the sum is taken over all $i$ and $j$ for which $x_{ij}$ is a variable appearing in $A(\Gamma)$.
\end{Corollary}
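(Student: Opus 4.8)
The plan is to reduce everything to the explicit list of $(n-2)$-minors recorded just before the statement, and then to match those minors one-for-one with the generators produced by the substitutions $x_{ij}\mapsto 1$. By Corollary~\ref{codim2} we have $\tr(\omega_R)=I_{n-2}(A(\Gamma))R$, and the preceding discussion identifies the $(n-2)$-minors of $A(\Gamma)$ as precisely the monomials $v_j/x_{i b(i,j)}$ with $j\in[n]$ and $i\in[n]\setminus\{j\}$. Thus it suffices to compute the image in $R$ of the ideal $\sum I(\Gamma)(x_{ij})$ and to check that its generators are exactly these minors.

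The combinatorial heart of the argument is a divisibility criterion: for a variable $x_{ab}$ appearing in $A(\Gamma)$, so that $\{a,b\}$ is an edge of $\Gamma$, and for a generator $v_k=\pm\prod_{i\neq k}x_{i b(i,k)}$ of $I(\Gamma)$, one has $x_{ab}\mid v_k$ if and only if $a\neq k$ and $b=b(a,k)$. This is immediate from the path description of $v_k$: the factors of $v_k$ have pairwise distinct first indices, so $x_{ab}$ can occur only as the factor indexed by $i=a$, which is $x_{a b(a,k)}$. Since each $v_k$ is squarefree, the substitution $x_{ab}\mapsto 1$ therefore sends $v_k$ to $v_k/x_{ab}=v_k/x_{a b(a,k)}$ when $x_{ab}\mid v_k$ and fixes $v_k$ otherwise; and $v_k/x_{a b(a,k)}$ is exactly the $(n-2)$-minor indexed by $(i,j)=(a,k)$.

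With this in hand both inclusions follow by bookkeeping. Passing to $R=S/I(\Gamma)$, every unchanged generator $v_k$ of some $I(\Gamma)(x_{ab})$ maps to $0$, while every changed generator $v_k/x_{ab}$ is one of the listed minors; hence the image of $\sum I(\Gamma)(x_{ij})$ lies in $I_{n-2}(A(\Gamma))R=\tr(\omega_R)$. Conversely, each minor $v_j/x_{i b(i,j)}$ with $i\neq j$ is the image of the generator $v_j$ of $I(\Gamma)(x_{i b(i,j)})$, since $x_{i b(i,j)}$ is a variable of $A(\Gamma)$ dividing $v_j$; this yields the reverse inclusion. Finally, the quotient in the statement is well-posed: a tree on $[n]$ has $n-1$ edges, so $A(\Gamma)$ has $2(n-1)$ variables while each $v_k$ uses only $n-1$ of them, whence every $v_k$ omits some variable and lies in $\sum I(\Gamma)(x_{ij})$, giving $I(\Gamma)\subseteq\sum I(\Gamma)(x_{ij})$.

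I do not expect a genuine obstacle: the entire content is the divisibility criterion, a one-line consequence of the path formula for the $v_j$. The only place demanding care is the matching step — verifying that the substitution generators and the minor list coincide exactly, with no minor omitted and no spurious generator surviving modulo $I(\Gamma)$.
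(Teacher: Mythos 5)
Your reduction of the right-hand side is fine as far as it goes: the divisibility criterion ($x_{ab}\mid v_k$ if and only if $a\neq k$ and $b=b(a,k)$) is correct, and it does show that $\big(\sum I(\Gamma)(x_{ij})\big)/I(\Gamma)$ is exactly the ideal of $R$ generated by the monomials $v_j/x_{ib(i,j)}$, $i\neq j$. The genuine gap is the premise you import from the discussion preceding the statement, namely that these monomials are \emph{all} of the $(n-2)$-minors of $A(\Gamma)$. That claim is false, and with it your reverse inclusion $\tr(\omega_R)\subseteq\big(\sum I(\Gamma)(x_{ij})\big)/I(\Gamma)$ collapses. A nonzero $(n-2)$-minor of $A(\Gamma)$ arises by deleting the row of an edge $e$ and two columns indexed by vertices lying in different components of $\Gamma$ with $e$ removed; such a minor equals some $v_j/x_{ib(i,j)}$ only when one of the two deleted vertices is an endpoint of $e$. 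These are precisely the minors visible in a row expansion of a maximal minor $v_j$ (every other entry of the deleted row then sits in a deleted column), which is why the expansion argument, and your matching, miss the minors for which neither deleted vertex is an endpoint of $e$.

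The paper's own closing example exhibits the failure. For the tree with edges $\{1,2\},\{2,3\},\{3,4\},\{3,5\}$, deleting the row of edge $\{2,3\}$ together with columns $1,5$, resp.\ $1,4$, gives the nonzero minors $bfg$, resp.\ $beh$. Both occur among the $18$ listed generators of $\tr(\omega_R)$ there, as they must since $\tr(\omega_R)=I_{n-2}(A(\Gamma))R$ by Corollary~\ref{codim2}; but the $20$ monomials $v_j/x_{ib(i,j)}$ yield only $16$ distinct monomials, all of degree $3$ and none equal to $bfg$ or $beh$, and the generators of $I(\Gamma)$ have degree $4$, so $bfg,beh\notin\sum I(\Gamma)(x_{ij})$. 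Hence your argument proves only the inclusion $\big(\sum I(\Gamma)(x_{ij})\big)/I(\Gamma)\subseteq\tr(\omega_R)$, and the equality cannot be repaired: the containment $I_{n-2}(A(\Gamma))R\subseteq\tr(\omega_R)$ holds for any such specialization (it is the easy inclusion $\tr(\omega_{R'})R\subseteq\tr(\omega_R)$ from the start of Section~3, applied to the generic $(n-1)\times n$ determinantal ring $R'$), so the trace genuinely contains $bfg$ and $beh$ while the right-hand side of the statement does not. In fairness, this defect is inherited rather than introduced: the paper's ``proof'' of Corollary~\ref{monloc} is the very discussion you cite, and the corollary as stated is inconsistent with Corollary~\ref{codim2} and with the example that follows it.
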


\begin{Example}
Let $\Gamma$ be the tree with the edges $\{1,2\},\{2,3\}, \{3,4\},\{3,5\}$.  Then  
\begin{eqnarray*}
A(\Gamma)=
\begin{pmatrix}
-x_{12} & x_{21} & 0 & 0 &0\\
0 & -x_{23} & x_{32} & 0 & 0\\
0 & 0 & -x_{34} & x_{43} & 0\\
0 & 0 & -x_{35} & 0 & x_{53}
	\end{pmatrix}
\end{eqnarray*}
and
\begin{eqnarray*}
I(\Gamma)&=& (x_{21}x_{32}x_{43}x_{53}, x_{12}x_{32}x_{43}x_{53},  x_{12}x_{23}x_{43}x_{53},  x_{12}x_{23}x_{34}x_{53}, x_{12}x_{23}x_{43}x_{35}).
\end{eqnarray*}
We write $A(\Gamma)$ modulo $I(\Gamma)$ as 
\begin{eqnarray*}
\begin{pmatrix}
	-a & b & 0 & 0 &0\\
	0 & -c & d & 0 & 0\\
	0 & 0 & -e & f & 0\\
	0 & 0 & -g & 0 & h
\end{pmatrix}.
\end{eqnarray*}
Then the canonical trace of the ring $R$ defined by $I(\Gamma)$ is the ideal of $3$-minors of this matrix, and we obtain
\begin{multline*}
\tr(\omega_R)=(a\,c\,e,\,a\,c\,g,\,a\,c\,f,\,a\,d\,f,\,a\,f\,g,\,b\,d\,f,\,b\,f\,g,\,c\,f\,g,\,a\,c\,h,\,\\
a\,d\,h,\,a\,e\,h,\,b\,d\,h,\,b\,e\,h,\,c\,e\,h,\,a\,f\,h,\,b\,f\,h,\,c\,f\,h,\,d\,f\,h).
\end{multline*}
\end{Example}

{}

\end{document}